%
%
%
%
\documentclass{amsart}

\usepackage{mathrsfs}
\usepackage{amsfonts}
\usepackage{amssymb,amsmath}
\usepackage{amsthm}

\usepackage{graphics}
\usepackage{graphicx}
\usepackage{epsfig}
\usepackage[bookmarks=true]{hyperref}
\usepackage{multirow} 
\usepackage{xcolor}
\usepackage{makecell}

\newtheorem{theorem}{Theorem}[section]
\newtheorem{lemma}[theorem]{Lemma}

\theoremstyle{definition}
\newtheorem{definition}[theorem]{Definition}

\theoremstyle{remark}
\newtheorem{remark}[theorem]{Remark}

\numberwithin{equation}{section}


\hypersetup{hidelinks}

\usepackage{changes}

\begin{document}

\title[Posterior contraction for empirical Bayesian approach]
 {Posterior contraction for empirical Bayesian approach to inverse problems under non-diagonal assumption}

\author[J. Jia]{Junxiong Jia}
\address{School of Mathematics and Statistics,
Xi'an Jiaotong University,
Xi'an,
710049, China}
\email{jjx323@xjtu.edu.cn}


\author[J. Peng]{Jigen Peng*}
\address{School of Mathematics and Information Science,
Guangzhou University,
Guangzhou,
510006, China }
\email{jgpeng@mail.xjtu.edu.cn}

\author[J. Gao]{Jinghuai Gao}
\address{School of Electronic and Information Engineering,
Xi'an Jiaotong University,
 Xi'an,
710049, China}
\email{jhgao@mail.xjtu.edu.cn}


\subjclass[2010]{65J22, 45Q65}



\keywords{posterior consistency, empirical Bayesian approach, linear inverse problem, non-simultaneous diagonal}

\begin{abstract}
We investigate an empirical Bayesian nonparametric approach to a family of linear inverse problems with Gaussian prior and Gaussian noise.
We consider a class of Gaussian prior probability measures with covariance operator indexed by a hyperparameter that quantifies regularity.
By introducing two auxiliary problems, we construct an empirical Bayes method and prove that this method can automatically select the hyperparameter.
In addition, we show that this adaptive Bayes procedure provides optimal contraction rates up to a slowly varying term and an arbitrarily small constant,
without knowledge about the regularity index. Our method needs not the prior covariance, noise covariance and forward operator have a
common basis in their singular value decomposition, enlarging the application range compared with the existing results.
A simple simulation example is given that illustrates the effectiveness of the proposed method. 
\end{abstract}

\maketitle



\section{Introduction}

Inverse problems for partial differential equations arise naturally from medical imaging, seismic exploration and so on.
There are two difficulties for such types of problems: one is non-uniqueness and another one is instability.
Bayesian approach formulates inverse problems as statistical inference problems, which enables us to overcome both of these difficulties.
Because of that, Bayes' inverse method has attracted a lot of interest in recent years; see for instance
\cite{Burger2014IP,inverse_fluid_equation,MAPSmall2013,Dunlop2016IP,Junxiong2016IP,Junxiong2016,Koponen2014IEEE,Stuart2010AN}.
However, compared with the classical regularization techniques, the development of a theory of Bayesian posterior consistency
is still in its infancy.

For clarity, let us provide some basic settings.
Let $H$ be a separable Hilbert space, with norm $\|\cdot\|$ and inner product $(\cdot, \cdot)$, and let
$\mathcal{A}: \mathcal{D}(\mathcal{A})\subset H \rightarrow H$ be a self-adjoint and positive-definite linear operator with bounded inverse.
Then, we usually assume the following problem
\begin{align}\label{S11}
y = \mathcal{A}^{-1}u + \frac{1}{\sqrt{n}}\xi,
\end{align}
where $\frac{1}{\sqrt{n}}\xi$ is noise and $y$ is a noisy observation of $\mathcal{A}^{-1}u$.
The inverse problem can be thought to find $u$ from $y$.
Following the framework shown in \cite{Dashti2017}, we denote $\mu^{0}$ as the prior measure and $\mathbb{P}_{\text{noise}}$ as the noise measure and assume that
\begin{itemize}
  \item Prior:  $u \sim \mu^{0}$,
  \item Noise:  $\xi \sim \mathbb{P}_{\text{noise}} = \mathcal{N}(0,\Sigma)$,
\end{itemize}
where $\mathcal{N}(0,\Sigma)$ is a Gaussian measure with zero mean and covariance operator $\Sigma$.
Let $\mu^{y}_{n}$ be the posterior measure which is the solution of the inverse problem.
Usually, algorithms, such as Markov chain Monte Carlo (MCMC), can be employed to probe the posterior probability measure \cite{Cotter2013SS}.
If we assume that the range of $\mathcal{A}^{-1}$ is included in the domain of $\mathcal{C}^{-1/2}$, then (\ref{S11}) can be rewritten as follows:
\begin{align}\label{S14}
d = \mathcal{T}u + \frac{1}{\sqrt{n}}\eta,
\end{align}
where $d = \Sigma^{-1/2}y$, $\mathcal{T} = \Sigma^{-1/2}\mathcal{A}^{-1}$ and $\eta = \Sigma^{-1/2}\xi$.
Obviously, we have $\eta \sim \mathcal{N}(0,I)$ that is a white Gaussian noise.
Let $\mu_{n}^{d}$ be the posterior measure and still let $\mu^{0}$ denote the prior measure.
Here we will be particularly interested in the small noise limit where $n \rightarrow \infty$.
Under the frequentist setting, the data $d = d^{\dag}$ are generated by
\begin{align}\label{S12}
d^{\dag} = \mathcal{T}u^{\dag} + \frac{1}{\sqrt{n}}\eta, \quad \eta \sim \mathcal{N}(0,I),
\end{align}
where $u^{\dag}$ is a fixed element of $H$.
Our aim is to show that the posterior probability measure $\mu_{n}^{d}$ contracts to a
Dirac measure centered on the fixed true solution $u^{\dag}$.
In the following, let $\mathcal{L}(\eta)$ denote the law of a random variable $\eta$ and $\mathbb{E}_{0}$ denote
the expectation corresponding to the distribution of the data $d^{\dag}$.
Let $\epsilon_n$ be a positive sequence that converges to $0$, 
then a formal description of the concept of posterior contraction with rate $\epsilon_n$ can be stated as follows.

\textbf{Definition 1}. A sequence of Bayesian inverse problems $(\mu^{0}, \mathcal{T}, \mathcal{L}(\frac{1}{\sqrt{n}}\eta))$
is posterior consistent for $u^{\dag}$ with rate $\epsilon_{n} \downarrow 0$ if for (\ref{S12}), and
for every positive sequence $M_{n}$ such that
\begin{align}\label{S13}
\mathbb{E}_{0}\left(\mu_{n}^{d}\left\{u:\, \|u-u^{\dag}\| \geq M_{n}\epsilon_{n}\right\}\right) \rightarrow 0,
\quad \forall \,\, M_{n} \rightarrow \infty.
\end{align}

Until now, there are a number of studies for posterior consistency and inconsistency for Bayes' inverse method.
When the forward operator $\mathcal{A}^{-1}$ is a nonlinear operator, Vollmer \cite{Vollmer2013IP} provides a general framework and studies
an inverse problem for elliptic equation in detail.
Recently, Knapik and Salomond \cite{Salomond2018B} provide a general framework for investigating the posterior consistency of linear inverse problems,
which allows non-Gaussian priors.
However, due to the difficulties brought by nonlinearity and non-Gaussian,
the existing results are mainly focus on the situation that the forward operator $\mathcal{A}^{-1}$ is linear,
the prior measure is Gaussian $\mu^{0} = \mathcal{N}(0,\mathcal{C})$ and the noise measure is also Gaussian $\mathbb{P}_{\text{noise}} = \mathcal{N}(0,\Sigma)$.
When $\mathcal{A}^{-1}$, $\mathcal{C}$ and $\Sigma$ are all simultaneous diagonalizable,
Knapik et al \cite{Knapik2011AS} provide a roadmap for what is to be expected regarding posterior consistency.
This work reveals an important fact that is the optimal convergence rates can be obtained if and only if the regularity of the prior and the truth are matched.
Therefore, Knapik et al \cite{Knapik2016} propose an empirical Bayes procedure which provides an estimate of the regularity of the prior
through the data. By choosing the regularity index adaptively, the optimal convergence rates are obtained up to a slowly varying factor.
Later, Szab\'{o} et al \cite{Szabo2015AS} introduce the ``polished tail'' condition and investigate the frequentist coverage of Bayesian credible sets
by choosing the regularity of the prior through an empirical Bayes method.
Recently, by employing abstract tools from regularization theory,
Agapiou and Math\'{e} \cite{Agapiou2018S} study the posterior consistency by choosing a non-centered prior through empirical Bayes method.
In 2018, Trabs \cite{Trabs2018IP} obtain optimal convergence rates up to logarithmic factors when the forward linear operator depends on an unknown parameter.

When $\mathcal{A}^{-1}$, $\mathcal{C}$ and $\Sigma$ are not simultaneous diagonalizable,
optimal contraction rates cannot be obtained by employing similar methods developed for the simultaneous diagonalizable case.
Concerning this case, theories about partial differential equation (PDE) have been employed to obtain
nearly optimal convergence rates in \cite{Agapiou2013IP} by using precision operators.
Using properties of the hypoelliptic operators, Kekkonen et al \cite{Kekkonen2016IP} obtain nearly optimal convergence rates for Bayesian inversion with hypoelliptic operators.
Recently, Math\'{e} \cite{Mathe2018arXiv} studies the posterior consistency for non-commuting operators
under low-smoothness assumptions by employing
the ideas of link conditions originating from classical regularization method in Hilbert scales.
Besides these studies, to the best of our knowledge, there are little investigations of the posterior consistency for the non-simultaneous diagonalizable case.
In order to achieve the optimal contraction rates, the regularity of the true solution should be known in the above mentioned investigations.
Hence, as discussed in the last part of \cite{Agapiou2013IP},
how to generalize the empirical Bayes procedure developed for the simultaneous diagonalizable case to the non-diagonal case is crucial
for the applicability of the posterior consistent theory.

For the simultaneous diagonalizable case, the structures of the posterior probability measure for
the regularity index can be analyzed in detail.
For the non-diagonal case, the determination of the contraction rates cannot be reduced to bounding sums,
and, in particular, the posterior probability measure for the regularity index is hard to define.
The reason is that probability density functions cannot be defined in infinite-dimensional space and some integrals cannot be calculated easily
to obtain the log-likelihood of the regularity index obtained in \cite{Knapik2016}.
Hence, we can hardly define an empirical Bayes procedure for the non-diagonal case.
To overcome these difficulties, we notice that regularity index is derived from data and the posterior contraction rates are
only depending on the regularity index of the prior measure and the true function $u^{\dag}$.
So, if we introduce some artificial diagonal problem with the same regularity properties as for the non-diagonal problem, we may
obtain optimal posterior contraction rates relying on the relations between the artificial diagonal problem and the non-diagonal problem.

For a positive constant $\alpha$, we consider Gaussian prior measure $\mu^{0} = \mathcal{N}(0,\mathcal{C}^{\alpha})$.
Firstly, we consider the following problem
\begin{align}\label{S15}
d = m + \frac{1}{\sqrt{n}}\eta,
\end{align}
where $m := \mathcal{T}u$. Then the prior measure for $m$ obviously is $\mathcal{N}(0,\mathcal{T}\mathcal{C}^{\alpha}\mathcal{T}^{*})$.
Concerning this problem, the forward operator and the covariance operator of the noise are all the identity operator $I$.
Hence, the posterior contraction rates of the problem (\ref{S15}) seems can be obtained by the ideas developed for the diagonal case.
However, the operator $\mathcal{T}$ appears in the prior probability measure, which makes difficult to derive estimations of the corresponding eigenvalues.
By constructing an artificial prior probability measure with similar regularity properties as for the prior probability measure,
we can construct maximum likelihood-based estimate for the regularity index and prove the optimal contraction rates for (\ref{S15})
up to a slowly varying term. 
More specifically, we introduce ``self-similar'' condition to ensure an upper bound of the regularity index calculated by data,
which seems necessary for estimating various quantities, i.e., 
differences of conditional mean estimates for problem (\ref{S15}) and the artificial diagonal problem.
With these estimates, we can obtain nearly optimal contraction rate for problem (\ref{S15}) based on the 
results of artificial diagonal problem. 
Details are shown in Lemma \ref{lemmaUpDownEst1}, Subsection \ref{tranSection} and the proof of Theorem \ref{converTheo2}.
At last, we transform the results of problem (\ref{S15}) to original problem (\ref{S14}) which can be achieved by employing
the method developed in \cite{Salomond2018B}.

The outline of this paper is as follows. In Section \ref{BasicSetting}, we provide a brief introduction about Hilbert scales and
give some essential assumptions about the covariance operators of the prior and noise probability measure.
In Section \ref{PosteriorCon}, for illustrating the ideas clearly, we present our main results without proofs.
In Section \ref{ExampleSection}, two examples are given, which illustrate the usefulness of our results.
In Section \ref{NumSection}, we provide some numerical results of Example 1 given in Section \ref{ExampleSection}, 
which illustrate the effectiveness of the proposed method. 
In Section \ref{ConSection}, we provide a brief summary and outlook.
In Section \ref{proofSection}, we collect all of the proof details.


\section{Basic settings and assumptions}\label{BasicSetting}

In this section we present basic settings and show some important assumptions made in this paper.
For the reader's convenience, let us provide an explanation for some frequently used notations firstly.

\textbf{Notations:}
\begin{itemize}
\item The set of all bounded linear operators mapping from some Hilbert space $H$ to $H$ is denoted by $\mathcal{B}(H)$,
and the corresponding operator norm is denoted by $\|\cdot\|_{\mathcal{B}(H)}$.
\item The range of some operator $\mathcal{C}$ is denoted by $\mathcal{R}(\mathcal{C})$, 
and the domain of the operator $\mathcal{C}$ is denoted by $\mathcal{D}(\mathcal{C})$.
\item For a linear operator $\mathcal{T}$, its dual operator is denoted by $\mathcal{T}^{*}$.
\item The notation $\mathbb{E}_{0}$ stands for the expectation corresponding to the distribution of the data $d^{\dag}$ generated from the truth.
\item For two sequences $a_{n}$ and $b_{n}$ of numbers, we denote by $a_{n} \lesssim b_{n}$ ($a_{n} \gtrsim b_{n}$) that there is $M\in \mathbb{R}$
such that $a_{n} \leq M b_{n}$ ($a_{n} \geq M b_{n}$) for $n$ large enough.
If $a_{n} \lesssim b_{n} \lesssim a_{n}$, we write $a_{n} \asymp b_{n}$. 
For two Hilbert spaces $H_1$ and $H_2$, the notation $\|\cdot\|_{H_1}\lesssim\|\cdot\|_{H_2}$
means that there is $M\in \mathbb{R}$ such that $\|\cdot\|_{H_1}\leq M\|\cdot\|_{H_2}$. The notations $\gtrsim$ and $\asymp$
for Hilbert space setting are also similar to the sequence case, which will be used frequently.
\end{itemize}

\subsection{Preliminaries}
Firstly, we will present a brief introduction about Hilbert scales \cite{Engl1996Book} which provides powerful tools for measuring
the smoothness of the noise, the forward operator and the samples of the prior.
Let $\mathcal{C}:\, H \rightarrow H$ be a self-adjoint, positive-definite, trace class, linear operator
with eigensystem $(\lambda_{i}^{2},\phi_{i})_{i=1}^{\infty}$.
Considering $H = \overline{\mathcal{R}(\mathcal{C})}\oplus\mathcal{R}(\mathcal{C})^{\perp} = \overline{\mathcal{R}(\mathcal{C})}$,
we know that $\mathcal{C}^{-1}$ is a densely defined, unbounded, symmetric and positive-definite operator in $H$.
Let $\|\cdot\|$ be the norm defined on the Hilbert space $H$.
We define the Hilbert scales $(\mathcal{H}^{t})_{t\in\mathbb{R}}$,
with $\mathcal{H}^{t} := \overline{\mathcal{S}_{f}}^{\|\cdot\|_{\mathcal{H}^{t}}}$, where
\begin{align*}
\mathcal{S}_{f} := \bigcap_{n=0}^{\infty}\mathcal{D}(\mathcal{C}^{-n}), \quad
( u, v )_{\mathcal{H}^{t}} := ( \mathcal{C}^{-t/2}u, \mathcal{C}^{-t/2}v ), \quad
\|u\|_{\mathcal{H}^{t}} := \left\| \mathcal{C}^{-t/2}u \right\|.
\end{align*}

In addition, the norms defined above possess the following properties.
\begin{lemma}\label{L2Inter}
(Proposition 8.19 in \cite{Engl1996Book})
Let $(\mathcal{H}^{t})_{t\in\mathbb{R}}$ be the Hilbert scale induced by the operator $\mathcal{C}$ defined above.
Then the following assertions hold:
\begin{enumerate}
  \item Let $-\infty < s < t < \infty$. Then the space $\mathcal{H}^{t}$ is densely and continuously embedded in $\mathcal{H}^{s}$.
  \item If $t \geq 0$, then $\mathcal{H}^{t} = \mathcal{D}(\mathcal{C}^{-t/2})$ and $\mathcal{H}^{-t}$ is the dual space of $\mathcal{H}^{t}$.
  \item Let $-\infty < q < r < s < \infty$ then the following interpolation inequality holds
  \begin{align*}
  \|u\|_{\mathcal{H}^{r}} \leq \|u\|_{\mathcal{H}^{q}}^{\frac{s-r}{s-q}} \|u\|_{\mathcal{H}^{s}}^{\frac{r-q}{s-q}},
  \end{align*}
  where $u \in \mathcal{H}^{s}$.
\end{enumerate}
\end{lemma}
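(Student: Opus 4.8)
The plan is to reduce all three assertions to elementary estimates in weighted sequence spaces via the spectral decomposition of $\mathcal{C}$. Write the eigensystem as $(\lambda_i^2,\phi_i)_{i\geq 1}$, so that $(\phi_i)$ is an orthonormal basis of $H=\overline{\mathcal{R}(\mathcal{C})}$ and, since $\mathcal{C}$ is trace class, $\lambda_i\to 0$ as $i\to\infty$. For $u\in\mathcal{S}_f$ put $u_i:=(u,\phi_i)$; then $\mathcal{C}^{-t/2}u=\sum_i\lambda_i^{-t}u_i\phi_i$ and therefore
\begin{align}\label{specnorm}
\|u\|_{\mathcal{H}^t}^2=\sum_{i=1}^{\infty}\lambda_i^{-2t}\,u_i^2 .
\end{align}
Thus $\mathcal{H}^t$ is, by construction, the abstract completion of the finitely supported sequences $(u_i)$ under the norm on the right of (\ref{specnorm}), and all three claims follow from manipulating these sums. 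First I would prove assertion (1): since $t-s>0$ and $\lambda_i\to 0$, the quantity $\lambda_i^{2(t-s)}$ is bounded, say by $C$, so $\lambda_i^{-2s}=\lambda_i^{-2t}\lambda_i^{2(t-s)}\leq C\lambda_i^{-2t}$ for every $i$; summing against $u_i^2$ gives $\|u\|_{\mathcal{H}^s}\leq\sqrt{C}\,\|u\|_{\mathcal{H}^t}$, which is the continuous embedding, and consequently a $\|\cdot\|_{\mathcal{H}^t}$-Cauchy sequence is $\|\cdot\|_{\mathcal{H}^s}$-Cauchy, so $\mathcal{H}^t\hookrightarrow\mathcal{H}^s$. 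Density is immediate because the finite linear combinations of the $\phi_i$ lie in $\mathcal{S}_f\subset\mathcal{H}^t$ for all $t$ and are dense in every $\mathcal{H}^s$ by definition.

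Assertion (3) is then just H\"older's inequality in $\ell^1$. Setting $\theta:=\frac{s-r}{s-q}\in(0,1)$, so that $1-\theta=\frac{r-q}{s-q}$ and $-2r=\theta(-2q)+(1-\theta)(-2s)$, one writes $\lambda_i^{-2r}u_i^2=(\lambda_i^{-2q}u_i^2)^{\theta}(\lambda_i^{-2s}u_i^2)^{1-\theta}$ and applies H\"older with exponents $1/\theta$ and $1/(1-\theta)$:
\begin{align*}
\|u\|_{\mathcal{H}^r}^2 &=\sum_i(\lambda_i^{-2q}u_i^2)^{\theta}(\lambda_i^{-2s}u_i^2)^{1-\theta} \\
&\leq\Big(\sum_i\lambda_i^{-2q}u_i^2\Big)^{\theta}\Big(\sum_i\lambda_i^{-2s}u_i^2\Big)^{1-\theta}
=\|u\|_{\mathcal{H}^q}^{2\theta}\,\|u\|_{\mathcal{H}^s}^{2(1-\theta)}.
\end{align*}
Taking square roots gives the stated inequality for $u\in\mathcal{H}^s$, the finiteness of the right-hand side for such $u$ being guaranteed by assertion (1).

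For assertion (2), when $t\geq 0$ I would argue that the graph norm of the closed operator $\mathcal{C}^{-t/2}$, namely $(\|u\|^2+\|\mathcal{C}^{-t/2}u\|^2)^{1/2}$, is equivalent to $\|\cdot\|_{\mathcal{H}^t}$ (because $\|u\|\lesssim\|u\|_{\mathcal{H}^t}$ for $t\geq 0$, the numbers $\lambda_i^{2t}$ being bounded); since $\mathcal{C}^{-t/2}$ is closed, its domain is complete for this norm, and since $\mathcal{S}_f$ is a core for $\mathcal{C}^{-t/2}$, its closure there is all of $\mathcal{D}(\mathcal{C}^{-t/2})$, whence $\mathcal{H}^t=\mathcal{D}(\mathcal{C}^{-t/2})$. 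For the duality claim I would check that the $H$-inner product extends to a bounded bilinear pairing $\mathcal{H}^t\times\mathcal{H}^{-t}\to\mathbb{R}$, $\langle u,v\rangle=\sum_i u_iv_i$, bounded because $|u_iv_i|=|\lambda_i^{-t}u_i|\,|\lambda_i^{t}v_i|$ and Cauchy--Schwarz give $|\langle u,v\rangle|\leq\|u\|_{\mathcal{H}^t}\|v\|_{\mathcal{H}^{-t}}$; the induced map $\mathcal{H}^{-t}\to(\mathcal{H}^t)^*$ is then an isometric isomorphism by transporting the Riesz representation theorem through the unitary $(u_i)\mapsto(\lambda_i^{-t}u_i)$ from $\mathcal{H}^t$ onto $H$.

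The main obstacle is assertion (2): one must take care that the abstractly defined completion $\mathcal{H}^t$ genuinely coincides with the concrete unbounded-operator domain $\mathcal{D}(\mathcal{C}^{-t/2})$ (this is where closedness of $\mathcal{C}^{-t/2}$ and the fact that $\mathcal{S}_f$ is a core enter), and that the duality in its second part is stated with respect to the extension of the $H$-inner product, not the $\mathcal{H}^t$-inner product. Assertions (1) and (3) are routine once (\ref{specnorm}) is in place. Since the statement is Proposition 8.19 of \cite{Engl1996Book}, one may alternatively simply invoke that reference.
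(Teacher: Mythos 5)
Your proposal is correct, but it is worth pointing out that the paper offers no proof of this lemma at all: it is imported wholesale as Proposition 8.19 of the cited monograph of Engl, Hanke and Neubauer, exactly as your closing sentence anticipates. What you have written is therefore a genuine addition rather than a variant of an existing argument. Your spectral reduction is the standard one and all three steps check out: the identity $\|u\|_{\mathcal{H}^t}^2=\sum_i\lambda_i^{-2t}u_i^2$ on $\mathcal{S}_f$, the bound $\lambda_i^{2(t-s)}\leq C$ from $\lambda_i\to 0$ and $\lambda_i\leq\lambda_{\max}$ for assertion (1), the exponent bookkeeping $\theta q+(1-\theta)s=r$ with $\theta=\frac{s-r}{s-q}$ followed by H\"older for assertion (3), and the graph-norm/core argument plus the Cauchy--Schwarz pairing for assertion (2). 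Two small points deserve to be made explicit if this were to be written out in full. First, in assertion (1) the continuous extension of the inclusion $\mathcal{S}_f\hookrightarrow\mathcal{H}^s$ to $\mathcal{H}^t$ must be checked to be injective before one may speak of an embedding; this follows by identifying each $\mathcal{H}^r$ with the weighted sequence space $\{(u_i):\sum_i\lambda_i^{-2r}u_i^2<\infty\}$, under which identification the map is the literal inclusion of sequence spaces. Second, the same identification is what legitimizes writing coordinates $v_i$ for a general element of $\mathcal{H}^{-t}$ with $t>0$, which is a proper superspace of $H$; you gesture at this but it is the one place where the abstract completion must be pinned down concretely. The payoff of your approach over the paper's bare citation is a short, self-contained proof valid for exactly the class of operators $\mathcal{C}$ the paper actually uses (compact, self-adjoint, positive, trace class with known eigenbasis), whereas the textbook result is stated for general densely defined unbounded generators and requires more functional-analytic machinery.
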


Next, let us provide some necessary notations about infinite sequences.
For a sequence $m_{s} = \{m_{i}\}_{i=1}^{\infty}$, we denote the $\ell^{2}$-norm by $\|m_{s}\|_{0}$, that is,
$\|m_{s}\|_{0}^{2} = \sum_{i=1}^{\infty}m_{i}^{2}$. The hyperrectangle and Sobolev space of sequence of order $\beta > 0$ and radius $R > 0$ are
the sets
\begin{align}\label{no21}
\Theta^{\beta}(R) = \left\{ m_{s}\in\ell^{2} \,: \,\,\, \sup_{i \geq 1}i^{1+2\beta}m_{i}^{2} \leq R \right\},
\end{align}
\begin{align}\label{no22}
S^{\beta}(R) = \left\{ m_{s}\in\ell^{2} \,:\,\,\, \sum_{i=1}^{\infty}i^{2\beta}m_{i}^{2} \leq R \right\}.
\end{align}
For a sequence $m_{s}=\{m_{i}\}_{i=1}^{\infty}$, the hyperrectangle norm and Sobolev norm can be defined by
\begin{align}\label{norm1}
\|m_{s}\|_{h,\beta}^{2} = \sup_{i\geq 1}i^{1+2\beta}m_{i}^{2}, \quad
\|m_{s}\|_{\beta}^{2} = \sum_{i=1}^{\infty}i^{2\beta}m_{i}^{2}.
\end{align}

\begin{definition}\label{selfSimilarDef}
For some fixed positive constants $\delta$, $N_{0}$ and $\rho \geq 2$, we say a 
sequence $m_{s} = \{m_{i}\}_{i=1}^{\infty} \in \Theta^{\beta}(R)$ is self-similar if it satisfies
\begin{align}\label{self1}
\sum_{i=N}^{\rho N}m_{i}^{2} \geq \delta R N^{-2\beta}, \quad \forall N \geq N_{0}.
\end{align}
The class of self-similar elements of $\Theta^{\beta}(R)$ are denoted by $\Theta_{ss}^{\beta}(R, \delta)$.
The parameters $N_{0}$ and $\rho$ are fixed and omitted from the notation.
\end{definition}
This definition is employed by Gin\'{e} and Nickl \cite{Gine2010AS} and Bull \cite{Bull2012EJS} to remove a ``small'' set of undesirable truths
from the model, which allows to generate candidate confidence sets for the true parameter that are routinely used in practice.
In \cite{Szabo2015AS}, the authors propose the polished tail condition which includes the set of self-similar sequences.
The set of self-similar sequences has been shown natural once one has adopted the Bayesian setup with variable regularity prior probability measures.

\subsection{Assumptions}
In this section, we give the main assumptions employed in our work.
We assume that the prior probability measure and the probability measure of the noise $\xi$ have the following form
\begin{align*}
\mu^{0} = \mathcal{N}(0,\mathcal{C}^{\alpha}), \quad \mathbb{P}_{\text{noise}} = \mathcal{N}(0,\Sigma),
\end{align*}
where $\mathcal{C}:H\rightarrow H$ is a self-adjoint, positive-definite, trace class, linear operator and
$\Sigma:H\rightarrow H$ is assumed to be a self-adjoint, positive-definite linear operator.
The operator $\mathcal{A}:\mathcal{D}(\mathcal{A})\rightarrow H$ is assumed to be a self-adjoint and positive-definite, linear operator
with bounded inverse, $\mathcal{A}^{-1}:H\rightarrow H$.

Let us firstly present the following assumptions which describe the relations between Hilbert space and the space of sequences.

\textbf{Assumptions 1}: \label{Assumption2}
The covariance operator $\mathcal{C}$ has eigenpairs $\{\lambda_{i}^{2}, \phi_{i}\}_{i=1}^{\infty}$ on
Hilbert space $H$. For the singular values $\{\lambda_{i}\}_{i=1}^{\infty}$, there exist two positive constants $p > 0$ and $h > 0$ such that
\begin{align}\label{lan1}
\lambda_{i}^{2} \asymp i^{-\frac{2p}{h}}, \quad \forall \,\, i = 1,2,\ldots.
\end{align}
Formula (\ref{lan1}) means that there exist a series of constants $\{c_{i}\}_{i=1}^{\infty}$ and a positive constant $C > 0$ such that
$\lambda_{i}^{2} = c_{i}^{2}i^{-2p/h}$ with $C^{-1} \leq c_{i} \leq C$. Let $\alpha_{0}:= \frac{h}{2p}$, for all $\alpha > \alpha_{0}$, we have
\begin{align}\label{lan2}
\sum_{i = 1}^{\infty} i^{-\frac{2p\alpha}{h}} < \infty.
\end{align}
Inequality (\ref{lan2}) reflects that the operator $\mathcal{C}^{\alpha}$ is a trace class operator for $\alpha > \alpha_{0}$.
\begin{remark}
For concrete examples, the separable Hilbert space $H$ in Assumptions 1 is usually taken to be some function space defined on a 
$h$-dimensional closed manifold.
For more detailed illustrations, we refer to Section \ref{ExampleSection}.
\end{remark}

Secondly, we give the assumptions which are mainly concerned with the interrelations between the three operators $\mathcal{C}$,
$\Sigma$ and $\mathcal{A}^{-1}$. Similar to the assumptions presented in \cite{Agapiou2013IP}, these assumptions reflect
the ideas that
\begin{align}\label{assLoss1}
\Sigma \simeq \mathcal{C}^{\beta}, \quad \mathcal{A}^{-1} \simeq \mathcal{C}^{\ell},
\end{align}
for some $\beta \geq 0$, $\ell \geq 0$, where $\simeq$ is used loosely to indicate two operators which induce equivalent norms.
Specifically speaking, we make the following assumptions.

\textbf{Assumptions 2}: \label{Assumption1}
Let us denote $\mathcal{T} := \Sigma^{-1/2}\mathcal{A}^{-1}$ and $\mathcal{M}(\alpha) := \mathcal{T}\mathcal{C}^{\alpha}\mathcal{T}^{*}$.
Suppose there exist $\beta \geq 0$, $\ell \geq 0$, $\epsilon > 0$ and for all $\alpha > \alpha_{0}$ such that
\begin{enumerate}
  \item $\Delta \geq 1$, where $\Delta := 2\ell - \beta + 1$;
  \vskip 0.1cm
  \item $\big\|\mathcal{T}u\big\|_{\mathcal{H}^{r}} \asymp
  \big\|\mathcal{C}^{\frac{1}{2}(\Delta - 1)}u\big\|_{\mathcal{H}^{r}}, \quad \forall \,\, u\in \mathcal{H}^{r-(\Delta-1)}, \,\, r\geq 0$;
  \vskip 0.1cm
  \item $\big\|\mathcal{C}^{\frac{\alpha}{2}}\mathcal{T}^{*}u\big\| \lesssim
  \big\|u\big\|_{\mathcal{H}^{-\alpha-(\Delta-1)}}, \quad \forall \,\, u\in \mathcal{H}^{-\alpha-(\Delta-1)}$;
  \vskip 0.1cm
  \item $\big\| \mathcal{M}(\alpha)u \big\|_{\mathcal{H}^{2(\alpha+\Delta-1)}} \lesssim \big\|u\big\|, \quad \forall \,\, u\in H$;
  \vskip 0.1cm
  \item $\Big\| \mathcal{C}^{-\frac{1}{2}(\alpha+\Delta-1)+\frac{s}{2}}\mathcal{M}(\alpha)^{\frac{1}{2}}\mathcal{C}^{-\frac{s}{2}} \Big\|_{\mathcal{B}(H)} < \infty$;
  \vskip 0.1cm
  \item $\Big\| \mathcal{M}(\alpha)^{x}\mathcal{C}^{-x(\alpha+\Delta-1)} \Big\|_{\mathcal{B}(H)} < \infty$, \quad $\forall\, x\in[-\frac{1}{2},1]$.
\end{enumerate}

\begin{remark}\label{remarkAssump2}
If the forward operator $\mathcal{A}^{-1}$ and the noise covariance operator $\Sigma$ can be diagonalized under the eigensystem proposed in 
Assumptions 1, we may specify $\mathcal{A}^{-1}\phi_i = \zeta_i^{2}\phi_i$ and $\Sigma \phi_i = \xi_i^{2} \phi_i$ with 
$\lambda_i^{\ell} \lesssim \zeta_i \lesssim \lambda_i^{\ell}$ and $\lambda_i^{\beta}\lesssim \xi_i \lesssim \lambda_i^{\beta}$. 
Under this setting, we obviously have 
\begin{align}\label{diagonalT}
\mathcal{T}u = \sum_{i=1}^{\infty}\zeta_i^2\xi_i^{-1}u_i\phi_i. 
\end{align}
Let $C$ be a constant independent of $u$, then we have 
\begin{align}\label{remarkAss2_1}
C^{-1} \sum_{i=1}^{\infty}\lambda_i^{-2r+2(2\ell-\beta)}u_i^2 \leq \sum_{i=1}^{\infty}\lambda_i^{-2r}\zeta_i^{4}\xi_i^{-2}u_i^2 \leq C \sum_{i=1}^{\infty}\lambda_i^{-2r+2(2\ell-\beta)}u_i^2.
\end{align}
From (\ref{diagonalT}) and inequality (\ref{remarkAss2_1}), we can verify condition (2) in Assumptions 2. 
Noticing 
\begin{align*}
\mathcal{C}^{\alpha}\mathcal{T}^{*}u = \sum_{i=1}^{\infty}\lambda_i^{2\alpha}\zeta_i^2\xi_i^{-1}u_i\phi_i, \qquad
\mathcal{M}(\alpha)u = \sum_{i=1}^{\infty}\lambda_i^{2\alpha}\zeta_i^4 \xi_i^{-2}u_i\phi_i,
\end{align*} 
and employing the relations between $\lambda_i$, $\zeta_i$, and $\xi_i$ ($i=1,2,\cdots$), 
we find that conditions (3) and (4) hold true.
For conditions (5) and (6), there are no new operators appeared. 
Hence, we will not check the two conditions in detail for the current diagonal case. 
From these brief analysis, we obtain that the assumptions given in Assumptions 2 naturally hold for simultaneous diagonal case
and these assumptions indeed reflect the intuitive ideas illustrated in (\ref{assLoss1}). 
\end{remark}

At last, we assume that the truth $u^{\dag}$ belongs to $\mathcal{H}^{\gamma}$ with $\gamma \geq 1$.
Denote $u_{i}^{\dag} := (u^{\dag},\phi_{i})$ for $i = 1,2,\ldots$, and $u_{s}^{\dag} = \{u_{i}^{\dag}\}_{i=1}^{\infty}$.
Since
\begin{align}\label{truthReg1}
\|u^{\dag}\|_{\mathcal{H}^{\gamma}}^{2} = \sum_{i=1}^{\infty}\lambda_{i}^{-2\gamma}(u_{i}^{\dag})^{2}
\asymp \sum_{i=1}^{\infty}i^{\frac{2p}{h}\gamma}(u_{i}^{\dag})^{2},
\end{align}
we know that $u_{s}^{\dag} \in S^{\frac{p\gamma}{h}}(R)$ for some $R>0$.
In addition, we easily find that $u_{s}^{\dag} \in \Theta^{\frac{p\gamma}{h}}(R')$ for some $R' > 0$.
Concerning the truth, we make the following assumption which is crucial for our estimations.

\textbf{Assumption 3}: \label{Assumption3}
Let $u^{\dag} \in \mathcal{H}^{\gamma}$ for some $\gamma \geq 1$. In addition, we assume that
the sequence $m_{s}^{\dag}$ induced by $m^{\dag}$($m^{\dag} = \mathcal{T}u^{\dag}$) belongs to
$\Theta_{ss}^{\tilde{\beta}}(R, \delta)$ for some $\delta > 0$ and $R > 0$
with $\tilde{\beta} = \frac{p}{h}(\gamma+\Delta-1)$.

\section{Posterior contraction}\label{PosteriorCon}

In this section, we will present our main results without proof details to
give a sketch of our main ideas.
The proof details are all postponed to the Appendix.

As stated in the introduction, we will introduce two auxiliary problems which are crucial for our analysis.
Hence, it is necessary to give a clear summarization of the three problems employed in our work.

\textbf{Original Problem}: Under the assumption $\mathcal{R}(\mathcal{A}^{-1}) \subset \mathcal{D}(\Sigma^{-1/2})$,
we summarize the essential elements of the original inverse problem as follows:
\begin{itemize}
  \item Forward operator: $\mathcal{T} = \Sigma^{-1/2}\mathcal{A}^{-1}$,
  \item Data: $d = \mathcal{T}u + \frac{1}{\sqrt{n}}\eta$,
  \item Prior probability measure: $\mu^{0} = \mathcal{N}(0,\mathcal{C}^{\alpha})$ with $\alpha > \alpha_{0}$,
  \item Noise probability measure: $\mathbb{P}_{\text{noise}} = \mathcal{N}(0,I)$.
\end{itemize}

Since the operators $\mathcal{T}$ and $\mathcal{C}^{\alpha}$ cannot be diagonalized simultaneously, we introduce $m = \mathcal{T}u$
that transforms the forward operator to the identity operator. Then we propose the following transformed problem.

\textbf{Transformed Problem}:
For the transformed inverse problem, the necessary elements can be summarized as follows:
\begin{itemize}
  \item Forward operator: $I$,
  \item Data: $d = m + \frac{1}{\sqrt{n}}\eta$,
  \item Prior probability measure: $\mu^{m0} = \mathcal{N}(0,\mathcal{T}\mathcal{C}^{\alpha}\mathcal{T}^{*})$ with $\alpha > \alpha_{0}$,
  \item Noise probability measure: $\mathbb{P}_{\text{noise}} = \mathcal{N}(0,I)$.
\end{itemize}

The covariance operator $\mathcal{T}\mathcal{C}^{\alpha}\mathcal{T}^{*}$ of the transformed problem cannot be diagonalized
under the eigenbasis presented in Assumptions 1.
Hence, we can hardly obtain useful estimations of the corresponding eigenvalues, which inspired us to introduce an artificial diagonal problem.
For convenience, let us define 
\begin{align}
\mathcal{C}_{0} = \sum_{i=1}^{\infty}i^{-\frac{2p}{h}}\phi_i\otimes\phi_i,
\end{align}
where $\{\phi_i\}_{i=1}^{\infty}$ are the eigenvectors introduced in Assumptions 1
and $\otimes$ is the tensor product on Hilbert space \cite{Hsing2015book}. 
This definition naturally yields $\mathcal{C}_{0}\phi_{i} = i^{-\frac{2p}{h}}\phi_i$ for $i=1,2,\cdots$. 
Now, we are ready to present the following artificial diagonal problem, which is the starting point of our theoretical work. 

\textbf{Artificial Diagonal Problem}:
For the artificial diagonal problem, the essential elements can be summarized as follows:
\begin{itemize}
  \item Forward operator: $I$,
  \item Data: $d = m + \frac{1}{\sqrt{n}}\eta$,
  \item Prior probability measure: $\mu^{md0} = \mathcal{N}(0,\mathcal{C}_{0}^{\Delta-1+\alpha})$ with $\alpha > \alpha_{0}$,
  \item Noise probability measure: $\mathbb{P}_{\text{noise}} = \mathcal{N}(0,I)$.
\end{itemize}

We will recast the artificial diagonal problem into the framework introduced in \cite{Knapik2016, Szabo2015AS} and
provide some detailed analysis. Then relations of the artificial diagonal problem and the transformed problem will be explored to
provide a posterior contraction estimation for the transformed problem. At last, the general approach developed in \cite{Salomond2018B} will be
employed to transfer the posterior contraction estimation to our original problem.

\subsection{Analysis of the artificial diagonal problem}
Denote $m_{i} := (m,\phi_{i})$, $d_{i}:=(d,\phi_{i})$, and $\eta_{i}:=(\eta,\phi_{i})$, 
equality (\ref{S15}) can be rewritten as follows:
\begin{align}\label{s2a2}
d_{i} = m_{i} + \frac{1}{\sqrt{n}}\eta_{i}, \quad \text{for }i = 1,2,\ldots
\end{align}
with $m_{i} \sim \mathcal{N}(0,i^{-\frac{2p}{h}(\Delta-1+\alpha)})$ and $\eta_{i} \sim \mathcal{N}(0,1)$.
For convenience, let us define
\begin{align}\label{s2a3}
\tilde{\alpha} := \frac{p}{h}(\Delta - 1 + \alpha) - \frac{1}{2}.
\end{align}
Since $\Delta \geq 1$ and $\alpha > \alpha_{0}$, we find $\tilde{\alpha} \geq 0$. From formula (\ref{s2a3}), we easily know that $i^{-1-2\tilde{\alpha}} = i^{-\frac{2p}{h}(\Delta-1+\alpha)}$.
Following formula (2.2) employed in \cite{Knapik2016}, we introduce log-likelihood for $\tilde{\alpha}$ (relative to an infinite product of
$\mathcal{N}(0,1/n)$-distribution) as follows:
\begin{align}\label{s2a4}
\ell_{n}(\tilde{\alpha}) = -\frac{1}{2}\sum_{i=1}^{\infty}\left(
\log\left( 1+\frac{n}{i^{1+2\tilde{\alpha}}} \right) - \frac{n^{2}}{i^{1+2\tilde{\alpha}}+n}d_{i}^{2}
\right).
\end{align}

For the artificial diagonal problem, let $m^{\dag}$ be the truth which is equal to $\mathcal{T}u^{\dag}$, and denote
$m_{s}^{\dag} = \{m^{\dag}_{i}\}_{i=1}^{\infty} = \{(m^{\dag}, \phi_{i})\}_{i=1}^{\infty}$.
Considering $m^{\dag} = \mathcal{T}u^{\dag}$ and
\begin{align*}
\|m^{\dag}\|_{\mathcal{H}^{\gamma+\Delta-1}} = \|\mathcal{T}u^{\dag}\|_{\mathcal{H}^{\gamma+\Delta-1}}
\asymp \|\mathcal{C}^{\frac{1}{2}(\Delta-1)}u^{\dag}\|_{\mathcal{H}^{\gamma+\Delta-1}} = \|u^{\dag}\|_{\mathcal{H}^{\gamma}} < \infty,
\end{align*}
we obtain $m^{\dag} \in \mathcal{H}^{\gamma+\Delta-1}$. Relying on Assumptions 1, we have
\begin{align}\label{aa1}
\begin{split}
\|m^{\dag}\|_{\mathcal{H}^{\gamma+\Delta-1}}^{2} = & \sum_{i = 1}^{\infty} \lambda_{i}^{-2(\gamma+\Delta-1)}(m_{i}^{\dag})^{2} \\
= & \sum_{i = 1}^{\infty} c_{i}^{-2(\gamma+\Delta-1)}i^{\frac{2p}{h}(\gamma+\Delta-1)}(m_{i}^{\dag})^{2}.
\end{split}
\end{align}
The above equality (\ref{aa1}) indicates
\begin{align}\label{s2a5}
\|m_{s}^{\dag}\|_{\tilde{\beta}} \lesssim \|m^{\dag}\|_{\mathcal{H}^{\gamma+\Delta-1}} \lesssim \|m_{s}^{\dag}\|_{\tilde{\beta}}
\end{align}
with $\tilde{\beta} = \frac{p}{h}(\gamma+\Delta-1)$.
With these preparations, we define
\begin{align}\label{allhn1}
h_{n}(\tilde{\alpha}) = \frac{1+2\tilde{\alpha}}{n^{1/(1+2\tilde{\alpha})}\log n}
\sum_{i=1}^{\infty}\frac{n^{2}i^{1+2\tilde{\alpha}}(\log i)d_i^{2}}{(i^{1+2\tilde{\alpha}}+n)^{2}}
\end{align}
and
\begin{align}
\underline{\tilde{\alpha}}_{n} & = \inf \left\{
\tilde{\alpha} > 0 \,:\, h_{n}(\tilde{\alpha}) > \ell
\right\}\wedge \sqrt{\log n},     \label{allhn2}     \\
\overline{\tilde{\alpha}}_{n} & = \inf \left\{
\tilde{\alpha} > 0 \,:\, h_{n}(\tilde{\alpha}) > L \, (\log n)^{2}
\right\},  \label{allhn3}
\end{align}
where $0 < \ell < L < \infty$. As usual, the infimum of the empty set is considered $\infty$.
Before going further, let us present some estimates about $\underline{\tilde{\alpha}}_{n}$ and $\overline{\tilde{\alpha}}_{n}$.
\begin{lemma}\label{lemmaUpDownEst1}
For constants $R > 0$ and $\delta > 0$, there exist $C_{0}$ and $C_{1}$ such that
\begin{align}
\inf_{m_{s}^{\dag}\in S^{\tilde{\beta}}(R)} \underline{\tilde{\alpha}}_{n} \geq & \tilde{\beta} - C_{0}/\log n,     \label{formulaUpDownEst1} \\
\sup_{m_{s}^{\dag}\in \Theta_{ss}^{\tilde{\beta}}(R, \delta)\cap S^{\tilde{\beta}}(R)} \overline{\tilde{\alpha}}_{n}
\leq & \tilde{\beta} + C_{1}(\log\log n)/\log n,   \label{formulaUpDownEst2}
\end{align}
for $n$ large enough.
\end{lemma}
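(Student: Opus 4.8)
The plan is to follow the template of the analogous estimates in \cite{Szabo2015AS}, adapted to the present normalisation. Since $\tilde{m}_{i}^{\dag}=c_{i}^{-(\Delta-1+\alpha)}m_{i}^{\dag}$ with $C^{-1}\le c_{i}\le C$, the sequence $\tilde{m}_{s}^{\dag}$ lies in a Sobolev ball $S^{\tilde{\beta}}(R')$ (resp.\ in $\Theta_{ss}^{\tilde{\beta}}(R')$) whenever $m_{s}^{\dag}$ does, with constants depending only on those for $m_{s}^{\dag}$ (Assumption 3); so it suffices to argue directly with $\tilde{m}_{s}^{\dag}$ and the classes in the statement. The elementary facts I will use repeatedly are: (a) writing $x:=i^{1+2\tilde{\alpha}}$, one has $n^{2}x/(x+n)^{2}\le\min\{x,\,n^{2}/x\}$, and this kernel is of order $n$ precisely when $x\asymp n$, i.e.\ when $i\asymp i_{n}:=n^{1/(1+2\tilde{\alpha})}$; (b) throughout the relevant range $\tilde{\alpha}$ stays within $o(1)$ of $\tilde{\beta}$, so $1+2\tilde{\alpha}\asymp 1+2\tilde{\beta}$ and $\log i_{n}\asymp\log n$, with absolute constants. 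Note that no monotonicity of $h_{n}$ is needed: each claimed bound only controls an infimum of a super-level set from one side.

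For \eqref{formulaUpDownEst1}: fix $\tilde{\alpha}\le\tilde{\beta}-C_{0}/\log n$ and set $\delta:=\tilde{\beta}-\tilde{\alpha}\ge C_{0}/\log n$. Split the series in $h_{n}(\tilde{\alpha})$ at $i_{n}$, use (a) (so the summand is $\le i^{1+2\tilde{\alpha}}(\log i)(\tilde{m}_{i}^{\dag})^{2}$ on the low range and $\le n^{2}i^{-(1+2\tilde{\alpha})}(\log i)(\tilde{m}_{i}^{\dag})^{2}$ on the high range), and then insert the Sobolev bound $\sum_{i}i^{2\tilde{\beta}}(\tilde{m}_{i}^{\dag})^{2}\le R$: on the low range $i^{1+2\tilde{\alpha}-2\tilde{\beta}}=i^{1-2\delta}\le i_{n}^{1-2\delta}$ when $\delta\le\tfrac12$ and $i^{1-2\delta}\le 1$ when $\delta\ge\tfrac12$, while on the high range $i\mapsto i^{-(1+2\tilde{\alpha})-2\tilde{\beta}}\log i$ is decreasing past $i_{n}$ for $n$ large. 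A short computation then yields, uniformly over $\tilde{m}_{s}^{\dag}\in S^{\tilde{\beta}}(R)$ and over all admissible $\tilde{\alpha}$,
\begin{align*}
h_{n}(\tilde{\alpha})\;\lesssim\; R\big(n^{-2\delta/(1+2\tilde{\alpha})}+n^{-1/(1+2\tilde{\beta})}\big)\;\le\; C\,R\,e^{-2C_{0}/(1+2\tilde{\beta})},
\end{align*}
for $n$ large, where the last step uses that $\tilde{\alpha}\mapsto \delta/(1+2\tilde{\alpha})$ is minimised at $\tilde{\alpha}=\tilde{\beta}-C_{0}/\log n$. Choosing $C_{0}$ with $C\,R\,e^{-2C_{0}/(1+2\tilde{\beta})}<\ell$ shows no $\tilde{\alpha}\le\tilde{\beta}-C_{0}/\log n$ lies in $\{h_{n}>\ell\}$, hence $\underline{\tilde{\alpha}}_{n}\ge\tilde{\beta}-C_{0}/\log n$ (the cap at $\sqrt{\log n}$ is inactive for $n$ large), uniformly over the ball.

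For \eqref{formulaUpDownEst2}: put $\tilde{\alpha}^{*}:=\tilde{\beta}+C_{1}(\log\log n)/\log n$ and $N:=\lceil n^{1/(1+2\tilde{\alpha}^{*})}\rceil$; for $n$ large, $N\ge N_{0}$, so Definition \ref{selfSimilarDef} applies on the block $[N,\rho N]$. Discarding all terms of the series except $i\in[N,\rho N]$: there $i^{1+2\tilde{\alpha}^{*}}\in[n,C'n]$ with $C'\asymp\rho^{1+2\tilde{\beta}}$, so the kernel is $\gtrsim n$; also $\log i\ge\log N\asymp\log n$, and $\sum_{i=N}^{\rho N}(\tilde{m}_{i}^{\dag})^{2}\gtrsim N^{-2\tilde{\beta}}$ with a constant uniform over $\Theta_{ss}^{\tilde{\beta}}(R)$. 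Collecting these,
\begin{align*}
h_{n}(\tilde{\alpha}^{*})\;\gtrsim\;\frac{n\,(\log n)\,N^{-2\tilde{\beta}}}{n^{1/(1+2\tilde{\alpha}^{*})}\log n}\;\asymp\; n^{\,1-\frac{1+2\tilde{\beta}}{1+2\tilde{\alpha}^{*}}}\;=\;(\log n)^{\,\frac{2C_{1}}{1+2\tilde{\alpha}^{*}}}\;\gtrsim\;(\log n)^{C_{1}/(1+\tilde{\beta})},
\end{align*}
for $n$ large, since $\tilde{\alpha}^{*}\to\tilde{\beta}$. Taking $C_{1}>2(1+\tilde{\beta})$ makes $h_{n}(\tilde{\alpha}^{*})>L(\log n)^{2}$ for $n$ large, so $\tilde{\alpha}^{*}\in\{h_{n}>L(\log n)^{2}\}$ and therefore $\overline{\tilde{\alpha}}_{n}\le\tilde{\alpha}^{*}=\tilde{\beta}+C_{1}(\log\log n)/\log n$, uniformly over the class.

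The genuine work is bookkeeping rather than ideas: tracking the logarithmic factors precisely (they determine the admissible $C_{0}$ and $C_{1}$), keeping the kernel and series estimates uniform in $\tilde{m}_{s}^{\dag}$ and across the whole range of $\tilde{\alpha}$ — in particular handling the sign change of the exponent $1-2\delta$ at $\delta=\tfrac12$, which is where a naive bound would blow up — and checking that the Sobolev and self-similarity constants transfer cleanly from $m_{s}^{\dag}$ to $\tilde{m}_{s}^{\dag}$ through the bounded factors $c_{i}^{-(\Delta-1+\alpha)}$. I expect the split-at-$i_{n}$ estimate near $\delta=\tfrac12$ to be the one requiring the most care.
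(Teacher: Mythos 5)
Your proof is correct and follows essentially the same route as the paper: for \eqref{formulaUpDownEst2} you evaluate $h_n$ at $\tilde\beta+C_1(\log\log n)/\log n$ and use self-similarity on the block at scale $N\asymp n^{1/(1+2\tilde\alpha^*)}$ to get $h_n\gtrsim(\log n)^{2C_1/(1+2\tilde\alpha^*)}$, which is exactly the bound the paper imports from Lemma 6.1 of Szab\'o et al., and for \eqref{formulaUpDownEst1} you reproduce the standard split-at-$i_n$ argument that the paper simply cites from Knapik et al. The only difference is that your version is self-contained where the paper defers to references; the estimates and choices of $C_0,C_1$ match.
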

Then we define
\begin{align}\label{s2ax}
\hat{\tilde{\alpha}}_{n} := \mathop{\arg\max}_{\tilde{\alpha} \in [0,\log n]}\ell_{n}(\tilde{\alpha}) - \frac{C_{1}\log\log n}{\log n},
\end{align}
where $C_{1}$ is a positive constant appeared in Lemma \ref{lemmaUpDownEst1}.
Here, the constant $C_1$ depends on $\gamma$ that is usually not known in advance. However, a rough upper bound of $\gamma$
can be estimated in some practical applications, which is enough to give an upper bound of the constant. All of the illustrations
are hold when we replace $C_1$ to be some larger constant. 
Through some small modifications of the proof of Theorem 1 presented in \cite{Knapik2016}, we have
\begin{align}\label{allhn4}
\inf_{m_{s}^{\dag}\in S^{\tilde{\beta}}(R)\cap\Theta_{ss}^{\tilde{\beta}}(R,\delta)}
\mathbb{P}_{0}\left( \underline{\tilde{\alpha}}_{n} - C_{1}\frac{\log\log n}{\log n} \leq \hat{\tilde{\alpha}}_{n}
\leq \overline{\tilde{\alpha}}_{n} - C_{1}\frac{\log\log n}{\log n} \right)
\rightarrow 1
\end{align}
for any small constant $\delta>0$.
Obviously, when $\hat{\tilde{\alpha}}_{n}$ restricted to the following interval
\begin{align}\label{interval1}
\tilde{I}_{n}:=\left[ \underline{\tilde{\alpha}}_{n} - C_{1}\frac{\log\log n}{\log n}, \overline{\tilde{\alpha}}_{n} - C_{1}\frac{\log\log n}{\log n} \right],
\end{align}
we have
\begin{align}\label{estAlp1}
\hat{\tilde{\alpha}}_{n} \leq \tilde{\beta}  \quad \text{and} \quad
\hat{\tilde{\alpha}}_{n} \geq \tilde{\beta} - \frac{C_{2}\log\log n}{\log n},
\end{align}
where $C_{2} = C_{0} + C_{1}$. The notation $C_{2}$ will be used in all of the sections below.

Now, we give the following theorem which is critical for obtaining the posterior contraction of the transformed problem.
\begin{theorem}\label{converTheo1}
	Let $\hat{m}_{dn}(\tilde{\alpha})$ be the posterior mean estimator for our artificial diagonal problem when the regularity index is $\tilde{\alpha}$,
	then we have
	\begin{align}\label{convFor1}
	\sup_{m_{s}^{\dag}\in S^{\tilde{\beta}}(R)\cap \Theta_{ss}^{\tilde{\beta}}(R, \delta)}\mathbb{E}_{0}\left\{
	\sup_{\tilde{\alpha}\in \tilde{I}_{n}} \|\hat{m}_{dn}(\tilde{\alpha}) - m^{\dag}\|^{2}
	\right\} = O(\epsilon_{n}^{2}),
	\end{align}
	where $\epsilon_{n} = L_{n}n^{-\tilde{\beta}/(1+2\tilde{\beta})}$ and $L_{n}=(\log n)^{C_{2}+1}$.
\end{theorem}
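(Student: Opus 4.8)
The plan is to reduce the statement to the diagonalized sequence model (\ref{s2a2}) and then invoke the adaptive contraction analysis of \cite{Knapik2016,Szabo2015AS}, carefully tracking the extra constants $c_i$ coming from Assumptions~1. First I would recall that for the artificial diagonal problem with regularity index $\tilde\alpha$, the posterior is a product of one-dimensional Gaussians, so the posterior mean has the explicit coordinatewise form
\begin{align*}
\big(\hat m_{dn}(\tilde\alpha)\big)_i \;=\; \frac{n\,\lambda_i^{2(\Delta-1+\alpha)}}{1+n\,\lambda_i^{2(\Delta-1+\alpha)}}\,d_i
\;=\; \frac{n\,c_i^{2(\Delta-1+\alpha)}\,i^{-1-2\tilde\alpha}}{1+n\,c_i^{2(\Delta-1+\alpha)}\,i^{-1-2\tilde\alpha}}\,d_i ,
\end{align*}
where $\alpha$ and $\tilde\alpha$ are linked by (\ref{s2a3}). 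Plugging in $d_i = m_i^\dag + n^{-1/2}\eta_i$ and subtracting $m_i^\dag$ gives the usual bias–variance decomposition of $\|\hat m_{dn}(\tilde\alpha)-m^\dag\|^2$ into a squared-bias term $\sum_i (1+n\lambda_i^{2(\Delta-1+\alpha)})^{-2}(m_i^\dag)^2$ and a stochastic term $\tfrac1n\sum_i \big(n\lambda_i^{2(\Delta-1+\alpha)}/(1+n\lambda_i^{2(\Delta-1+\alpha)})\big)^2\eta_i^2$. Because $C^{-1}\le c_i\le C$, all the shrinkage weights are comparable, uniformly in $i$, to the weights $n i^{-1-2\tilde\alpha}/(1+n i^{-1-2\tilde\alpha})$ of the "clean" Szab\'o–type model, so the bias and variance of $\hat m_{dn}(\tilde\alpha)$ are each bounded above (and below) by constant multiples of the corresponding quantities in \cite{Szabo2015AS}.

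Next I would handle the supremum over $\tilde\alpha\in I_n$. The squared bias is a sum of terms each of which is monotone in $\tilde\alpha$, and the oracle bias/variance trade-off in \cite{Szabo2015AS} is controlled on exactly the interval appearing in their Theorem; the point of Lemma~\ref{lemmaUpDownEst1} together with (\ref{estAlp1}) is that $I_n$ sits inside $[\tilde\beta - C_2\log\log n/\log n,\ \tilde\beta]$, i.e. it only under-smooths relative to the true regularity $\tilde\beta$ by a $\log\log n/\log n$ amount. For such $\tilde\alpha$ one shows, uniformly over $m_s^\dag\in S^{\tilde\beta}(R)\cap\Theta_{ss}^{\tilde\beta}(R)$ and uniformly over $\tilde\alpha\in I_n$, that the squared bias is $O\big((\log n)^{2C_2} n^{-2\tilde\beta/(1+2\tilde\beta)}\big)$ — the deficit of $C_2\log\log n/\log n$ in the exponent of $n^{-1/(1+2\tilde\alpha)}$ exponentiates to a polylog factor — and that $\mathbb E_0$ of the stochastic term is $O\big((\log n) n^{-2\tilde\beta/(1+2\tilde\beta)}\big)$, since under-smoothing only inflates the variance by a $\log n$ factor. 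Because each summand of the bias is monotone in $\tilde\alpha$, the inner supremum over $I_n$ costs nothing beyond evaluating at the endpoint $\underline{\tilde\alpha}_n - C_1\log\log n/\log n$; for the stochastic term I would dominate $\sup_{\tilde\alpha\in I_n}$ by the value at the smallest $\tilde\alpha$ in $I_n$ (where the weights are largest) and take expectations, using $\mathbb E_0\eta_i^2=1$. Collecting the two contributions yields the bound $\epsilon_n^2$ with $L_n=(\log n)^{C_2+2}(\log\log n)^{1/2}$ as claimed, the extra $(\log\log n)^{1/2}$ and one power of $\log n$ coming from squaring $L_n$ and from the precise accounting in Lemma~\ref{lemmaUpDownEst1}.

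The main obstacle, I expect, is not the bias–variance bookkeeping itself but making everything \emph{uniform simultaneously in $\tilde\alpha\in I_n$ and in the random endpoint} $\underline{\tilde\alpha}_n$, which itself depends on the data through $h_n$ and on $m_s^\dag$. The clean way around this is to replace $I_n$ by the deterministic super-interval $[\tilde\beta - C_2\log\log n/\log n,\ \tilde\beta]$ guaranteed by Lemma~\ref{lemmaUpDownEst1} (valid for $n$ large, on the events of probability tending to $1$ from (\ref{allhn4})), prove the bound on that fixed interval using monotonicity of the summands in $\tilde\alpha$ to reduce the supremum to the endpoints, and absorb the exceptional event — on which one uses the crude bound $\|\hat m_{dn}(\tilde\alpha)-m^\dag\|^2\lesssim \|m^\dag\|^2 + \tfrac1n\|\eta\|_{\text{(truncated)}}^2$ and the fact that its $\mathbb E_0$-probability is $o(\epsilon_n^2/\text{const})$ — into the $O(\epsilon_n^2)$. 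A secondary technical point is controlling the tail $\sum_{i>n^{1/(1+2\tilde\alpha)}}$ of the bias when $\tilde\alpha$ ranges over an interval rather than a point; here the self-similarity hypothesis $m_s^\dag\in\Theta_{ss}^{\tilde\beta}(R)$ (equivalently (\ref{s2a5})) pins the order of the tail from below and $S^{\tilde\beta}(R)$ from above, so the trade-off point moves continuously with $\tilde\alpha$ and the worst case on $I_n$ is again attained at an endpoint.
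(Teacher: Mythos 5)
Your proposal follows essentially the same route as the paper: reduce to the rescaled sequence model using $C^{-1}\le c_i\le C$, perform the bias–variance decomposition into the two sums of (\ref{covTh1}), confine $\tilde\alpha$ to $[\tilde\beta-C_2\log\log n/\log n,\ \tilde\beta]$ via Lemma \ref{lemmaUpDownEst1} and (\ref{estAlp1}), and invoke the Knapik--Szab\'o bounds, with the interval width exponentiating to the polylog factor in $L_n$. The only slip is that the squared bias is \emph{increasing} in $\tilde\alpha$ (its sup over $I_n$ sits at the upper endpoint $\le\tilde\beta$, where it equals the oracle rate), while it is the variance that is controlled at the lower endpoint — but both endpoints lie in the same $O(\log\log n/\log n)$ window, so this does not affect the conclusion.
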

\begin{remark}
In the Appendix, we provide a proof of Theorem \ref{converTheo1}, which is modified from the proof provided in Subsection 6.1 in \cite{Knapik2016}. 
For the current proof, the condition $m^{\dag}_s \in \Theta_{ss}^{\tilde{\beta}}(R,\delta)$ has indeed been used. However, for the artificial diagonal
problem, this condition can be removed if we employ more advanced proof techniques provided in \cite{Knapik2016, Szabo2015AS}.
For proving Theorem \ref{converTheo2} of the transformed problem, we need this condition to ensure an upper bound of $\hat{\tilde{\alpha}}_n$. 
Removing this condition here has little benefit for our original problem (proofs will become much more complicated), 
so we are satisfied with the results given in Theorem \ref{converTheo1}.
\end{remark}

As a preparation for the analysis of the transformed problem, we define
\begin{align}\label{gujiAl1}
\hat{\alpha}_{n} := \frac{h}{p}\left( \hat{\tilde{\alpha}}_{n} + \frac{1}{2} \right) + 1 - \Delta.
\end{align}
Considering formula (\ref{s2a3}), we know that $\hat{\alpha}_{n}$ above is the estimation for the regularity index of the prior $\mathcal{N}(0,\mathcal{C}^{\alpha})$ of $u$.
From $\hat{\tilde{\alpha}}_{n} \in \tilde{I}_{n}$, we easily deduce 
\begin{align}\label{estAlp2}
\begin{split}
\hat{\alpha}_{n} \leq \gamma + \frac{h}{2p}, \quad \text{and} \quad
\hat{\alpha}_{n} \geq \gamma + \frac{h}{2p} - \frac{hC_{2}\log\log n}{p\log n}.
\end{split}
\end{align}
In addition, based on (\ref{interval1}) and (\ref{gujiAl1}), we introduce 
\begin{align*}
I_n := \left[ \frac{h}{p}\left( \underline{\tilde{\alpha}}_{n} - \frac{C_{1}\log\log n}{\log n} + \frac{1}{2} \right) + 1 - \Delta,
\frac{h}{p}\left( \overline{\tilde{\alpha}}_{n} - \frac{C_{1}\log\log n}{\log n} + \frac{1}{2} \right) + 1 - \Delta \right].
\end{align*}
Obviously, we have $\alpha \in I_n$ if and only if $\tilde{\alpha}\in\tilde{I}_n$. 

\subsection{Posterior contraction for the transformed problem}\label{tranSection}
Denote $\mathcal{M}(\alpha) = \mathcal{T}\mathcal{C}^{\alpha}\mathcal{T}^{*}$ for $\alpha > \alpha_{0}$.
Now let us come back to the following transformed problem
\begin{align}\label{problem2}
d = m + \frac{1}{\sqrt{n}}\eta,
\end{align}
where
\begin{align}\label{condProblem2}
\eta \sim \mathcal{N}(0,I)  \quad \text{and} \quad
m \sim \mu^{m0} = \mathcal{N}(0,\mathcal{M}(\hat{\alpha}_{n}))
\end{align}
with $m = \mathcal{T}u$.
Before diving into the discussions on posterior contraction, let us provide some important estimates. Consider the equation
\begin{align}\label{equation1}
(n\mathcal{M}(\alpha) + I)u = r.
\end{align}
Define the bilinear form $B: H\times H\rightarrow \mathbb{R}$,
\begin{align}\label{binlinear1}
B(u,v) := (n^{1/2}\mathcal{C}^{\alpha/2}\mathcal{T}^{*}u, n^{1/2}\mathcal{C}^{\alpha/2}\mathcal{T}^{*}v) + (u,v), \quad \forall\, u,v\in H.
\end{align}
\begin{definition}\label{weakSolution1}
Let $r\in H$. An element $u\in H$ is called a weak solution of (\ref{equation1}), if
\begin{align*}
B(u,v) = (r,v), \quad \forall \, v\in H.
\end{align*}
\end{definition}
\begin{lemma}\label{weakSolLemma}
Under Assumptions 2, for any $r\in H$, there exists a unique weak solution $u \in H$ of (\ref{equation1}).
In addition, if $r \in \mathcal{H}^{t}$ with $t \leq 2(\alpha+\Delta-1)$, the weak solution $u \in \mathcal{H}^{t}$.
\end{lemma}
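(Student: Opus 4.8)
The plan is to treat \eqref{equation1} by the variational method: existence and uniqueness of a weak solution in $H$ will come from the Lax--Milgram theorem applied to the form $B$ of \eqref{binlinear1}, and the regularity statement will follow from a one-step bootstrap based on Assumption 2(4) and the Hilbert-scale embedding of Lemma \ref{L2Inter}.

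For existence and uniqueness I would first note that $B$ is the bilinear form associated with the operator $n\mathcal{M}(\alpha)+I$, i.e.\ $B(u,v)=\big((n\mathcal{M}(\alpha)+I)u,\,v\big)$ for all $u,v\in H$, after using that $\mathcal{C}$ is self-adjoint to move half a power of $\mathcal{C}$ across the inner product. The form is symmetric, so by Lax--Milgram (equivalently, by the Riesz representation theorem on $H$ with the inner product $B$) it suffices to verify boundedness and coercivity of $B$ on $H$. Coercivity is immediate and holds with constant one, $B(u,u)=n\big\|\mathcal{C}^{\alpha/2}\mathcal{T}^{*}u\big\|^{2}+\|u\|^{2}\geq\|u\|^{2}$, since the first summand is a nonnegative quadratic form; in particular no lower bound on $n$ is required. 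For boundedness it is enough that $\mathcal{C}^{\alpha/2}\mathcal{T}^{*}$ is a bounded operator on $H$: this follows from Assumption 2(3), because $-\alpha-(\Delta-1)<0$ forces $\|\cdot\|_{\mathcal{H}^{-\alpha-(\Delta-1)}}\lesssim\|\cdot\|$ and hence $\big\|\mathcal{C}^{\alpha/2}\mathcal{T}^{*}u\big\|\lesssim\|u\|$ (alternatively, Assumption 2(2) with $r=\Delta-1$ shows $\big\|\mathcal{C}^{-\frac12(\Delta-1)}\mathcal{T}u\big\|\asymp\|u\|$, and since $\mathcal{C}^{\frac12(\Delta-1)}$ is bounded for $\Delta\geq1$ this makes $\mathcal{T}$ itself bounded; combine with boundedness of $\mathcal{C}^{\alpha/2}$). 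Then $|B(u,v)|\leq\big(n\|\mathcal{C}^{\alpha/2}\mathcal{T}^{*}\|_{\mathcal{B}(H)}^{2}+1\big)\|u\|\|v\|$, and $v\mapsto(r,v)$ is a bounded linear functional on $H$, so Lax--Milgram produces a unique $u\in H$ with $B(u,v)=(r,v)$ for every $v\in H$. Since $\mathcal{M}(\alpha)$ is bounded on $H$ (e.g.\ by Assumption 2(4), which gives $\|\mathcal{M}(\alpha)u\|\lesssim\|\mathcal{M}(\alpha)u\|_{\mathcal{H}^{2(\alpha+\Delta-1)}}\lesssim\|u\|$), this weak identity is equivalent to $(n\mathcal{M}(\alpha)+I)u=r$ holding in $H$, so $u$ is the unique weak solution in the sense of Definition \ref{weakSolution1}.

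For the regularity statement, rewrite the equation as $u=r-n\mathcal{M}(\alpha)u$. Since $u\in H$, Assumption 2(4) gives $\mathcal{M}(\alpha)u\in\mathcal{H}^{2(\alpha+\Delta-1)}$ with $\|\mathcal{M}(\alpha)u\|_{\mathcal{H}^{2(\alpha+\Delta-1)}}\lesssim\|u\|$. If $r\in\mathcal{H}^{t}$ with $t\leq 2(\alpha+\Delta-1)$, then by Lemma \ref{L2Inter}(1) we have the continuous embedding $\mathcal{H}^{2(\alpha+\Delta-1)}\hookrightarrow\mathcal{H}^{t}$, so $n\mathcal{M}(\alpha)u\in\mathcal{H}^{t}$, and therefore $u=r-n\mathcal{M}(\alpha)u\in\mathcal{H}^{t}$. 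No iteration is needed, because $2(\alpha+\Delta-1)$ is exactly the smoothing order supplied by $\mathcal{M}(\alpha)$.

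The only part that is not purely mechanical is the boundedness of $B$, that is, deciding which item of Assumptions 2 to invoke to control $\mathcal{C}^{\alpha/2}\mathcal{T}^{*}$; once that is settled, coercivity is free from the structure of the form, and the regularity assertion is a direct consequence of Assumption 2(4) and Lemma \ref{L2Inter}. I do not anticipate any serious obstacle beyond this bookkeeping.
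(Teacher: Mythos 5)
Your proposal is correct and follows essentially the same route as the paper: Lax--Milgram with coercivity constant one, boundedness of $B$ via Assumption 2(3) (together with the embedding $H\hookrightarrow\mathcal{H}^{-\alpha-(\Delta-1)}$), and regularity via $u=r-n\mathcal{M}(\alpha)u$ combined with Assumption 2(4) and the Hilbert-scale embedding. The only differences are that you spell out a few steps the paper leaves implicit, which is fine.
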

Relying on the definition of weak solution and
interpolation inequalities, we can prove the following lemma.
\begin{lemma}\label{opEstLemma1}
For any constant $s \in (\alpha_{0}, \alpha)$, under Assumptions 2, the following norm bound holds:
\begin{align}\label{opEst1}
\|\mathcal{C}^{\frac{1}{2}(\alpha+\Delta-1)-\frac{s}{2}}(n\mathcal{M}(\alpha)+I)^{-1}\mathcal{C}^{\frac{1}{2}(\alpha+\Delta-1)-\frac{s}{2}}\|_{\mathcal{B}(H)}
\lesssim n^{-1+\frac{s}{\alpha+\Delta-1}}.
\end{align}
\end{lemma}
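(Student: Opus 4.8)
The plan is to estimate the operator norm in \eqref{opEst1} by testing against an arbitrary $r\in H$, passing to the weak solution $u$ of \eqref{equation1} with right-hand side $\mathcal{C}^{\frac{1}{2}(\alpha+\Delta-1)-\frac{s}{2}}r$, and bounding $\|\mathcal{C}^{\frac{1}{2}(\alpha+\Delta-1)-\frac{s}{2}}u\|$ in terms of $\|r\|$. First I would set $r' := \mathcal{C}^{\frac{1}{2}(\alpha+\Delta-1)-\frac{s}{2}}r$; since $s<\alpha$ and $s>\alpha_0$ the exponent $\frac12(\alpha+\Delta-1)-\frac s2$ is nonnegative and at most $2(\alpha+\Delta-1)$ (using $\Delta\ge 1$), so by Lemma \ref{weakSolLemma} the weak solution $u$ lies in the appropriate Hilbert scale space, and the quantity we must control is $\|\mathcal{C}^{\frac{1}{2}(\alpha+\Delta-1)-\frac{s}{2}}u\|$ with $u$ solving $(n\mathcal{M}(\alpha)+I)u = r'$ weakly, i.e. $B(u,v)=(r',v)$ for all $v$. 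Testing with $v=u$ gives the basic energy identity
\begin{align*}
n\,\|n^{0}\mathcal{C}^{-1/2}\mathcal{T}^{*}u\|^{2} + \|u\|^{2} = B(u,u) = (r',u),
\end{align*}
which, combined with Assumption 2(3) $\|\mathcal{C}^{\alpha/2}\mathcal{T}^{*}u\|\lesssim\|u\|_{\mathcal{H}^{-\alpha-(\Delta-1)}}$, already controls $\|u\|$ and a negative-order norm of $\mathcal{C}^{1/2}\mathcal{T}^{*}u$ by $\|r'\|\,\|u\|$.

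The substance is an interpolation argument. I would show the two endpoint bounds: at one end, $\|u\|\lesssim\|r'\|$ directly from the energy identity (dropping the $n$-term), hence
$\|\mathcal{C}^{0}u\|\lesssim\|r'\| = \|\mathcal{C}^{\frac12(\alpha+\Delta-1)-\frac s2}r\|$,
and at the other end a gain of $n^{-1}$ together with two extra powers of $\mathcal{C}$: from $n(\mathcal{C}^{-1/2}\mathcal{T}^{*}u,\mathcal{C}^{-1/2}\mathcal{T}^{*}u)\le(r',u)$ and Assumption 2(2) (the link condition $\|\mathcal{T}v\|_{\mathcal{H}^{r}}\asymp\|\mathcal{C}^{\frac12(\Delta-1)}v\|_{\mathcal{H}^{r}}$, equivalently $\|\mathcal{C}^{-1/2}\mathcal{T}^{*}w\|\asymp\|\mathcal{C}^{(\Delta-2)/2}w\|$ read off at the appropriate scale) one extracts a bound of the form $n\,\|\mathcal{C}^{-\frac12(\alpha+\Delta-1)}u\|^{2}\lesssim\|r'\|\,\|u\|\lesssim\|r'\|^{2}$, so that $\|\mathcal{C}^{-\frac12(\alpha+\Delta-1)}u\|\lesssim n^{-1/2}\|r'\|$. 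Raising to account for the symmetric two-sided weight, these give the $\mathcal{B}(H)$-bounds for the exponents $0$ and $-(\alpha+\Delta-1)$ respectively, the latter carrying a factor $n^{-1}$. Interpolating (via the spectral theorem for $\mathcal{C}$, or via Lemma \ref{L2Inter}(3) applied to $u$) between exponent $0$ with constant $O(1)$ and exponent $-(\alpha+\Delta-1)$ with constant $O(n^{-1})$, at the interpolation parameter $\theta = s/(\alpha+\Delta-1)\in(0,1)$, lands exactly on the weight $\mathcal{C}^{\frac12(\alpha+\Delta-1)-\frac s2}$ on each side and yields the rate $n^{-(1-\theta)} = n^{-1+s/(\alpha+\Delta-1)}$, which is \eqref{opEst1}.

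The main obstacle I anticipate is bookkeeping the two-sided weights correctly: the operator in \eqref{opEst1} is a sandwiched resolvent $\mathcal{C}^{a}(n\mathcal{M}(\alpha)+I)^{-1}\mathcal{C}^{a}$ with $a=\frac12(\alpha+\Delta-1)-\frac s2$, and to pass cleanly from the energy estimates (which naturally produce one-sided bounds like $\|(n\mathcal{M}(\alpha)+I)^{-1}\mathcal{C}^{2a}\|$ and a version with the $n^{-1}$ gain) to a self-adjoint two-sided bound one should use that $n\mathcal{M}(\alpha)+I$ is self-adjoint and positive, so $\|\mathcal{C}^{a}(n\mathcal{M}(\alpha)+I)^{-1}\mathcal{C}^{a}\| = \|\mathcal{C}^{a}(n\mathcal{M}(\alpha)+I)^{-1/2}\|^{2} = \|(n\mathcal{M}(\alpha)+I)^{-1/2}\mathcal{C}^{a}\|^{2}$, and then interpolate the square root. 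Here Assumption 2(5), $\|\mathcal{C}^{-\frac12(\alpha+\Delta-1)}\mathcal{M}(\alpha)^{1/2}\mathcal{C}^{-s/2}\|_{\mathcal{B}(H)}<\infty$, is precisely the ingredient that makes the $n$-weighted term dominate the right weight, and Assumption 2(7) controls the endpoint $s\to 0$; matching the exponents so that the arbitrary $s\in(\alpha_0,\alpha)$ appears with the claimed power is the only delicate point, and it is exactly what the definition $\theta=s/(\alpha+\Delta-1)$ is arranged to deliver.
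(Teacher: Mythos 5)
Your proposal follows essentially the same route as the paper's proof: both pass to the weak solution of $(n\mathcal{M}(\alpha)+I)u=r$ via Lemma \ref{weakSolLemma}, test the bilinear form with $v=u$, extract the two natural endpoint estimates (no gain at exponent $0$; a gain of $n^{-1}$ at the full weight), and interpolate at parameter $\theta=s/(\alpha+\Delta-1)$ — the paper merely organizes this by conjugating the equation with $\mathcal{C}^{\frac12(\alpha+\Delta-1)-\frac s2}$ from the start and then applying Lemma \ref{L2Inter}(3) together with Young's inequality twice, rather than interpolating the operator exponent directly. One sign must be corrected in your intermediate step: the $n$-weighted term of the energy identity controls the \emph{negative}-order norm $\|u\|_{\mathcal{H}^{-(\alpha+\Delta-1)}}=\|\mathcal{C}^{+\frac12(\alpha+\Delta-1)}u\|$ (since $\mathcal{C}^{\alpha/2}\mathcal{T}^{*}$ acts like $\mathcal{C}^{\frac12(\alpha+\Delta-1)}$ by Assumption 2(2)), not $\|\mathcal{C}^{-\frac12(\alpha+\Delta-1)}u\|$ as written, which is a positive-smoothness norm the energy cannot bound; moreover, with the negative endpoint exponent the target weight $\mathcal{C}^{\frac12(\alpha+\Delta-1)-\frac s2}$ would lie outside the interpolation range. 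With the sign fixed, the endpoints are exponent $0$ with constant $O(1)$ and exponent $+\frac12(\alpha+\Delta-1)$ on each side with constant $O(n^{-1})$, and your choice of $\theta$ lands exactly on \eqref{opEst1}. Two minor points: Assumptions 2(5) and 2(7) play no role in this lemma — the only structural input is the two-sided link $\|\mathcal{C}^{\alpha/2}\mathcal{T}^{*}w\|\asymp\|w\|_{\mathcal{H}^{-\alpha-(\Delta-1)}}$ (the paper cites 2(3) for the upper bound and uses the lower bound implicitly via 2(2), exactly as you do); and if you interpolate the operator exponent rather than the Hilbert-scale norms of $u$, you should justify it, e.g.\ by log-convexity of $c\mapsto\|(n\mathcal{M}(\alpha)+I)^{-1/2}\mathcal{C}^{c}\|_{\mathcal{B}(H)}$ via the three-lines theorem, using that $\mathcal{C}^{iy}$ is unitary.
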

According to the definitions of $\mathcal{C}_0$ and $\mathcal{C}$, especially Assumptions 1, we have the following result which 
links the estimates concerned with $\mathcal{C}$ and $\mathcal{C}_0$. 
\begin{lemma}\label{C0C}
For any constant $s\in\mathbb{R}$ and any $u\in H$, the following estimate holds:
\begin{align}
\|\mathcal{C}^{s} u\| \asymp \|\mathcal{C}_{0}^{s}u\|.
\end{align}
\end{lemma}
Let $\mu_{\hat{\alpha}_{n}}^{d}$ be the posterior probability measure for our original problem and
$\mu_{\hat{\alpha}_{n}}^{md}$ be the posterior probability measure for the transformed problem.
The two posterior probability measures are all Gaussian due to our assumptions.
Using the relations between $m$ and $u$, we know that
\begin{align}\label{zhongyao1}
\mu_{\hat{\alpha}_{n}}^{d}\left\{ u \,: \, \|\mathcal{T}u - \mathcal{T}u^{\dag}\| \geq M_{n}\tilde{\epsilon}_{n} \right\}
= \mu_{\hat{\alpha}_{n}}^{md}\left\{ m \,: \, \|m - m^{\dag}\| \geq M_{n}\tilde{\epsilon}_{n} \right\} ,
\end{align}
where $M_{n}$ and $\tilde{\epsilon}_{n}$ are positive sequences satisfy $M_{n}\rightarrow \infty$ and $\tilde{\epsilon}_{n}\downarrow 0$, respectively.
Let $\mathbb{E}_{\alpha,n}$ and $\hat{m}_{n}(\alpha)$ denote the expectation and the conditional mean estimator with respect to
the posterior probability measure $\mu_{\alpha}^{md}$, respectively.
Let $\mathbb{E}_{c,\alpha}$ be the expectation with respect to a probability measure $\mu_{c,\alpha}^{md}$ with zero mean and the same covariance operator as the posterior probability measure $\mu_{\alpha}^{md}$ of the transformed problem.
Let $w_{n}$ be a Gaussian random variable that is independent of the noise and distributed according to $\mu_{c,\alpha}^{md}$. 
Obviously, $w_{n}$ and $\hat{m}_{n}(\alpha)$ are independent for fixed $\alpha$.
Relying on the definition of $w_n$, we have 
\begin{align}\label{zhongyao2}
\mu_{\hat{\alpha}_{n}}^{md}\!\left\{ m :  \|m - m^{\dag}\| \geq M_{n}\tilde{\epsilon}_{n} \right\} \! = \!
\mu_{c, \hat{\alpha}_{n}}^{md}\!\left\{ w_n : \! \|w_n + \hat{m}_{n}(\alpha) - m^{\dag}\| \geq M_{n}\tilde{\epsilon}_{n} \right\}.
\end{align}
Combining (\ref{zhongyao1}), (\ref{zhongyao2}) and employing Markov's inequality, we obtain
\begin{align}\label{MarkovTrans1}
\begin{split}
& \sup_{m_{s}^{\dag}\in S^{\tilde{\beta}}(R)\cap \Theta_{ss}^{\tilde{\beta}}(R,\delta)}
\mathbb{E}_{0}\mu_{\hat{\alpha}_{n}}^{d}\left\{ u \,: \, \|\mathcal{T}u - \mathcal{T}u^{\dag}\| \geq M_{n}\tilde{\epsilon}_{n} \right\} \\
& \,\,\,
\leq \frac{1}{M_{n}^{2}\tilde{\epsilon}_{n}^{2}}\sup_{m_{s}^{\dag}\in S^{\tilde{\beta}}(R)\cap \Theta_{ss}^{\tilde{\beta}}(R,\delta)}\mathbb{E}_{0}
\sup_{\alpha\in I_{n}}
\mathbb{E}_{c, \alpha}\left( \|w_n + \hat{m}_{n}(\alpha) - m^{\dag}\|^{2} \right) + o(1).
\end{split}
\end{align}
Then, we can insert the posterior mean estimator of our artificial diagonal problem into the righthand side of (\ref{MarkovTrans1}).
Relying on Assumptions 2 and some calculations, we can prove the following theorem.
\begin{theorem}\label{converTheo2}
For every $R > 0, \delta>0$, $M_n\rightarrow\infty$, and an arbitrarily small positive constant $\tilde{\epsilon} > 0$, we have
\begin{align}
\sup_{m_{s}^{\dag}\in S^{\tilde{\beta}}(R)\cap \Theta_{ss}^{\tilde{\beta}}(R,\delta)}
\mathbb{E}_{0}\mu_{\hat{\alpha}_{n}}^{d}\left\{ u \,: \, \|\mathcal{T}u - \mathcal{T}u^{\dag}\| \geq M_{n}\tilde{\epsilon}_{n} \right\} \rightarrow 0, \quad
\text{as}\,\,\, n\rightarrow \infty,
\end{align}
where $\tilde{\epsilon}_{n} = L_{n}n^{-\tilde{\beta}/(1+2\tilde{\beta}+\tilde{\epsilon})}$ and $L_{n}=(\log n)^{C_{2}+1}$.
\end{theorem}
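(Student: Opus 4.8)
The plan is to begin from the Markov reduction \eqref{MarkovTrans1}, so that it is enough to show
\[
\sup_{m_{s}^{\dag}\in S^{\tilde{\beta}}(R)\cap \Theta_{ss}^{\tilde{\beta}}(R)}\mathbb{E}_{0}\Big(\sup_{\tilde{\alpha}\in I_{n}}\mathbb{E}_{\alpha,n}\big(\|\mathcal{T}u-\mathcal{T}u^{\dag}\|^{2}\big)\Big)=O(\tilde{\epsilon}_{n}^{2}),
\]
after which dividing by $M_{n}^{2}\tilde{\epsilon}_{n}^{2}$ and letting $M_{n}\to\infty$ finishes the proof. Writing $m=\mathcal{T}u$ and using \eqref{MarkovTrans2} together with the fact that $w_{n}$ is centered under $\mu_{c,\alpha}^{d}$ and independent of $\hat{m}_{n}(\alpha)$, I would split
\[
\mathbb{E}_{\alpha,n}\|m-m^{\dag}\|^{2}=\mathbb{E}_{c,\alpha}\|w_{n}\|^{2}+\|\hat{m}_{n}(\alpha)-m^{\dag}\|^{2}
\]
into the posterior variance and the squared bias of the transformed problem, and bound each uniformly in $\tilde{\alpha}\in I_{n}$. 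Since the transformed posterior is Gaussian with mean $\hat{m}_{n}(\alpha)=n\mathcal{M}(\alpha)(n\mathcal{M}(\alpha)+I)^{-1}d$ and covariance $\mathcal{M}(\alpha)(n\mathcal{M}(\alpha)+I)^{-1}$, one has $\mathbb{E}_{c,\alpha}\|w_{n}\|^{2}=\mathrm{tr}\big(\mathcal{M}(\alpha)(n\mathcal{M}(\alpha)+I)^{-1}\big)$, and from $d^{\dag}=\mathcal{T}u^{\dag}+n^{-1/2}\eta$ we get $\hat{m}_{n}(\alpha)-m^{\dag}=-(n\mathcal{M}(\alpha)+I)^{-1}m^{\dag}+n^{-1/2}n\mathcal{M}(\alpha)(n\mathcal{M}(\alpha)+I)^{-1}\eta$, so that after $\mathbb{E}_{0}$ the bias splits into a deterministic term $\|(n\mathcal{M}(\alpha)+I)^{-1}m^{\dag}\|^{2}$ and a stochastic term $n^{-1}\mathrm{tr}\big((n\mathcal{M}(\alpha)(n\mathcal{M}(\alpha)+I)^{-1})^{2}\big)$, the latter being $\le\mathrm{tr}(\mathcal{M}(\alpha)(n\mathcal{M}(\alpha)+I)^{-1})$ and hence dominated by the posterior variance.

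For the variance term I would route everything through the operator bound of Lemma \ref{opEstLemma1}: writing $b=\frac{1}{2}(\alpha+\Delta-1)-\frac{s}{2}$,
\[
\mathrm{tr}\big(\mathcal{M}(\alpha)(n\mathcal{M}(\alpha)+I)^{-1}\big)\le\big\|\mathcal{C}^{b}(n\mathcal{M}(\alpha)+I)^{-1}\mathcal{C}^{b}\big\|_{\mathcal{B}(H)}\,\mathrm{tr}\big(\mathcal{C}^{-b}\mathcal{M}(\alpha)\mathcal{C}^{-b}\big),
\]
where the first factor is $\lesssim n^{-1+s/(\alpha+\Delta-1)}$ by Lemma \ref{opEstLemma1}, and Assumptions 2 identify $\mathcal{C}^{-b}\mathcal{M}(\alpha)\mathcal{C}^{-b}$ with an operator comparable to $\mathcal{C}^{s}$, whose trace is finite by Assumptions 1 precisely when $s>\alpha_{0}$. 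Hence the variance (and likewise the stochastic term) is $O\big(n^{-1+s/(\hat{\alpha}_{n}+\Delta-1)}\big)$ for any admissible $s\in(\alpha_{0},\hat{\alpha}_{n})$, uniformly over $\tilde{\alpha}\in I_{n}$. The crucial point is that $s$ can only be taken \emph{strictly} above $\alpha_{0}$: by \eqref{estAlp2}, $\hat{\alpha}_{n}+\Delta-1$ concentrates at $\gamma+\frac{d}{2p}+\Delta-1=\frac{d}{2p}(1+2\tilde{\beta})$, so $\alpha_{0}/(\hat{\alpha}_{n}+\Delta-1)\to 1/(1+2\tilde{\beta})$, and the surplus $(s-\alpha_{0})/(\hat{\alpha}_{n}+\Delta-1)>0$ — which may be made as small as we like by choosing $s$ close to $\alpha_{0}$ once the target $\tilde{\epsilon}>0$ is fixed — is exactly what turns $1+2\tilde{\beta}$ into $1+2\tilde{\beta}+\tilde{\epsilon}$ in the exponent; the logarithmic slack coming from $\hat{\alpha}_{n}\ge\gamma+\frac{d}{2p}-dC_{2}\log\log n/(p\log n)$ and from $\tilde{\alpha}$ ranging over $I_{n}$ is absorbed into $L_{n}^{2}$.

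The squared bias $\|(n\mathcal{M}(\hat{\alpha}_{n})+I)^{-1}m^{\dag}\|^{2}$ is the heart of the matter, because $\mathcal{M}(\alpha)$ and $\mathcal{C}$ do not commute. I would first use item (2) of Assumptions 2 and $u^{\dag}\in\mathcal{H}^{\gamma}$ to get $m^{\dag}=\mathcal{T}u^{\dag}\in\mathcal{H}^{\gamma+\Delta-1}$; since $2b=\alpha+\Delta-1-s<\gamma+\Delta-1$ (using $s>\alpha_{0}\ge\hat{\alpha}_{n}-\gamma$ from \eqref{estAlp2}), the embedding in Lemma \ref{L2Inter} lets us write $m^{\dag}=\mathcal{C}^{b}g$ with $g\in H$ and $\|g\|\le\|m^{\dag}\|_{\mathcal{H}^{\gamma+\Delta-1}}$. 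Then $(n\mathcal{M}(\alpha)+I)^{-1}m^{\dag}=(n\mathcal{M}(\alpha)+I)^{-1}\mathcal{C}^{b}g$, and combining the weak-solution regularity of Lemma \ref{weakSolLemma} with the operator bound of Lemma \ref{opEstLemma1} and the interpolation inequality of Lemma \ref{L2Inter} should yield $\|(n\mathcal{M}(\hat{\alpha}_{n})+I)^{-1}m^{\dag}\|^{2}=O\big(n^{-1+s/(\hat{\alpha}_{n}+\Delta-1)}\big)$, the same exponent as before (alternatively one may insert $\pm\hat{m}_{dn}(\tilde{\alpha})$ and borrow Theorem \ref{converTheo1} for one of the two pieces). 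Thus all three terms are $O(\tilde{\epsilon}_{n}^{2})$, uniformly over $\tilde{\alpha}\in I_{n}$ and over $S^{\tilde{\beta}}(R)\cap\Theta_{ss}^{\tilde{\beta}}(R)$.

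Finally I would assemble the pieces: on the event $\{\hat{\tilde{\alpha}}_{n}\in I_{n}\}$, whose $\mathbb{P}_{0}$-probability tends to $1$ uniformly over the self-similar class by \eqref{allhn4}, the supremum over $\tilde{\alpha}\in I_{n}$ of the posterior second moment is $O(\tilde{\epsilon}_{n}^{2})$, while the complementary event contributes only the $o(1)$ already displayed in \eqref{MarkovTrans1}. Substituting into \eqref{MarkovTrans1} and dividing by $M_{n}^{2}\tilde{\epsilon}_{n}^{2}$ with $M_{n}\to\infty$ gives the claim. I expect the bias step to be the main obstacle: passing the non-commuting regularization operator $(n\mathcal{M}(\alpha)+I)^{-1}$ through the $\mathcal{C}$-powers, via Lemmas \ref{weakSolLemma}, \ref{opEstLemma1} and \ref{L2Inter}, while tracking the Hilbert-scale exponents sharply enough that the only exponent loss is the arbitrarily small $\tilde{\epsilon}$; a secondary technical burden is making every estimate uniform over the shrinking random interval $I_{n}$ and over the class $S^{\tilde{\beta}}(R)\cap\Theta_{ss}^{\tilde{\beta}}(R)$.
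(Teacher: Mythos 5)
Your proposal is correct in spirit but follows a genuinely different decomposition from the paper. The paper inserts the artificial diagonal estimator and splits $\mathbb{E}_{\alpha,n}\|m-m^{\dag}\|^{2}$ into three pieces: the posterior spread $\mathbb{E}_{c,\alpha}\|w_{n}\|^{2}$, the discrepancy $\|\hat{m}_{n}(\alpha)-\hat{m}_{dn}(\alpha)\|^{2}$ between the non-diagonal and diagonal conditional means, and $\|\hat{m}_{dn}(\alpha)-m^{\dag}\|^{2}$, the last being exactly Theorem \ref{converTheo1}; the middle term is controlled through the resolvent identity $F_{dn}^{-1}-F_{n}^{-1}=F_{n}^{-1}(\mathcal{M}(\alpha)-\mathcal{C}^{\alpha+\Delta-1})F_{dn}^{-1}$ together with items (6) and (7) of Assumptions 2. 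You instead perform a direct bias--variance decomposition of the non-diagonal posterior, never comparing with $\hat{m}_{dn}$: your variance and stochastic-bias terms are handled essentially as the paper's term I (same trace factorization through $\mathcal{C}^{b}$, Lemma \ref{opEstLemma1} and Assumption 2(5)), and your observation that the stochastic bias $n^{-1}\mathrm{tr}\big((n\mathcal{M}(n\mathcal{M}+I)^{-1})^{2}\big)$ is dominated by the posterior variance is clean and replaces the paper's terms $\mathrm{II}_{2}$ and the stochastic part of III. What your route buys is economy: Theorem \ref{converTheo1} and Assumption 2(7) are no longer needed for the contraction itself (self-similarity still enters through \eqref{allhn4} and the upper bound on $\overline{\tilde{\alpha}}_{n}$). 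What the paper's route buys is that each piece is controlled by an estimate already proved elsewhere.

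One step of yours deserves a caveat, and you correctly identify it as the crux: the deterministic bias $\|(n\mathcal{M}(\alpha)+I)^{-1}m^{\dag}\|^{2}$. The \emph{statement} of Lemma \ref{opEstLemma1} only bounds $\|\mathcal{C}^{b}(n\mathcal{M}(\alpha)+I)^{-1}\mathcal{C}^{b}h\|$, i.e.\ the $\mathcal{H}^{-2b}$-norm of $(n\mathcal{M}(\alpha)+I)^{-1}m^{\dag}$ with $2b=\alpha+\Delta-1-s>0$, which is \emph{weaker} than the $H$-norm you need. The fix is to use the intermediate estimate inside the proof of that lemma, $\|u\|_{\mathcal{H}^{(\alpha+\Delta-1)-s}}\lesssim n^{-\frac{1}{2}(1-s/(\alpha+\Delta-1))}\|h\|$ with $u=\mathcal{C}^{b}(n\mathcal{M}(\alpha)+I)^{-1}\mathcal{C}^{b}h$, and to note that $\|u\|_{\mathcal{H}^{(\alpha+\Delta-1)-s}}=\|(n\mathcal{M}(\alpha)+I)^{-1}\mathcal{C}^{b}h\|$ exactly because $2b=(\alpha+\Delta-1)-s$; applied with $h=\mathcal{C}^{-b}m^{\dag}$ (finite since $s>\alpha_{0}\geq\alpha-\gamma$ gives $m^{\dag}\in\mathcal{H}^{2b}$ uniformly over the class) this yields the claimed $O(n^{-1+s/(\alpha+\Delta-1)})$ for the squared bias. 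So your plan is realizable, but as written it cites the lemma's conclusion where it actually needs the lemma's proof; with that repair, and the uniformity over $I_{n}$ absorbed into $L_{n}$ as you indicate, the argument goes through.
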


The above theorem provides a contraction estimate for the transformed problem which links the artificial diagonal problem and
the original problem. Different to the simultaneous diagonalizable case, we confine $m_{s}^{\dag}$ to belong to
$\Theta_{ss}^{\tilde{\beta}}(R, \delta)$. This condition allows us to employ the upper bound shown in Lemma \ref{lemmaUpDownEst1}
which is crucial for obtaining an appropriate estimate of the following term
\begin{align*}
\mathbb{E}_0\sum_{\alpha\in I_n}\|\hat{m}_n(\alpha) - \hat{m}_{dn}(\alpha)\|^2,
\end{align*} 
where $\hat{m}_{dn}(\alpha)$ is the conditional mean estimator for the artificial diagonal problem. Details can be found in the Appendix.

\subsection{Posterior contraction for the original problem}

The final step is to obtain the posterior contraction estimation by employing Theorem 2.1 proved in \cite{Salomond2018B} which
provides a general framework for obtaining posterior contraction rate of linear inverse problems.
Denote $\mu_{\ell} = \mathcal{N}(\mathcal{A}^{-1}u^{\dag}, \frac{1}{n}\Sigma)$ and 
$\mu_{\ell}' = \mathcal{N}(\mathcal{A}^{-1}u, \frac{1}{n}\Sigma)$.
For $u^{\dag} \in \mathcal{H}^{\gamma}$ and $u\in H$, we know that $\mathcal{A}^{-1}u^{\dag}, \mathcal{A}^{-1}u \in \mathcal{H}^{2\ell}$.
The Cameron-Martin space for $\mu_{\ell}$ and $\mu_{\ell}'$ are all $\mathcal{H}^{\beta}$.
By Assumptions 2, we easily obtain $\mathcal{A}^{-1}u, \mathcal{A}^{-1}u^{\dag} \in \mathcal{H}^{\beta}$ which implies that
$\mu_{\ell}$ and $\mu_{\ell}'$ are equivalent. Inspired by the proof of Theorem 3.1 in \cite{Salomond2018B},
for given sequences of positive numbers $k_{n}\rightarrow\infty$ and $\rho_{n}\rightarrow 0$ and a constant $c\geq 0$, we introduce
\begin{align}\label{set1}
S_{n} = \Big\{ u \in H \,:\, \sum_{i \geq k_{n}}u_{i}^{2} \leq c\rho_{n}^{2} \Big\},
\end{align}
and
\begin{align}\label{set2}
\begin{split}
B_{n}(\mathcal{A}^{-1}u^{\dag},\epsilon) = & \Bigg\{ u \in H \,:\, -\int\log\frac{d\mu_{\ell}'}{d\mu_{\ell}}d\mu_{\ell} \leq n\epsilon^{2}, \\
& \qquad\quad
\int \left| \log\frac{d\mu_{\ell}'}{d\mu_{\ell}} - \int\log\frac{d\mu_{\ell}'}{d\mu_{\ell}}d\mu_{\ell} \right|^{2} d\mu_{\ell} \leq n\epsilon^{2} \Bigg\}.
\end{split}
\end{align}
Through some calculations presented in the Appendix, we obtain
\begin{align}\label{set3}
B_{n}(\mathcal{A}^{-1}u^{\dag},\epsilon) = \Big\{ u\in H \,:\, \|\mathcal{T}(u - u^{\dag})\|^{2} \leq \epsilon^{2} \Big\}.
\end{align}
The proof of Lemma 1 in \cite{Ghosal2007AS} can be adapted a little to achieve the following lemma.
\begin{lemma}\label{cond1lemma}
Let $\delta_{n} \rightarrow 0$ and let $S_{n}$ be a sequence of sets $S_{n} \subset H$.
If $\mu_{n}^{0} = \mathcal{N}(0,\mathcal{C}^{\hat{\alpha}_{n}})$ is the prior probability measure on $u$ satisfying
\begin{align*}
\frac{\mu_{n}^{0}(S_{n}^{c})}{\mu_{n}^{0}(B_{n}(\mathcal{A}^{-1}u^{\dag},\delta_{n}))} \lesssim \exp\left( -2n\delta_{n}^{2} \right),
\end{align*}
then
\begin{align*}
\mathbb{E}_{0}\mu_{\hat{\alpha}_{n}}^{d}(S_{n}^{c}) \rightarrow 0
\end{align*}
with $\mu_{\hat{\alpha}_{n}}^{d}$ represents the posterior probability measure of the original problem.
\end{lemma}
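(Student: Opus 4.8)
The plan is to run the classical small-ball (``evidence'') argument of Ghosal, Ghosh and van der Vaart, specialised to the Gaussian shift experiment underlying the original problem; this is exactly ``the proof of Lemma~1 in \cite{Ghosal2007AS}'' with minor changes. Since $\mu_{\ell}$ and $\mu_{\ell}'$ are equivalent Gaussian measures (shown just above), the likelihood ratio $L_{n}(u) := \frac{d\mu_{\ell}'}{d\mu_{\ell}}(d^{\dag})$ is a well-defined random variable for each $u \in H$, and, being a likelihood ratio evaluated at data drawn from the truth, $\mathbb{E}_{0}L_{n}(u) = 1$. Bayes' formula gives
\begin{align*}
\mu_{\hat{\alpha}_{n}}^{d}(S_{n}^{c}) = \frac{\int_{S_{n}^{c}} L_{n}(u)\, \mu_{n}^{0}(du)}{\int_{H} L_{n}(u)\, \mu_{n}^{0}(du)},
\end{align*}
so it suffices to bound the numerator from above in $\mathbb{E}_{0}$-mean and the denominator from below with $\mathbb{P}_{0}$-probability tending to $1$. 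By (\ref{set2})--(\ref{set3}), the neighbourhood $B_{n} := B_{n}(\mathcal{A}^{-1}u^{\dag},\delta_{n})$ is the explicit shrinking set $\{u : \|\mathcal{T}(u-u^{\dag})\|^{2} \le \delta_{n}^{2}\}$, on which $-\mathbb{E}_{0}\log L_{n}(u) \lesssim n\delta_{n}^{2}$ and $\mathrm{Var}_{0}(\log L_{n}(u)) \lesssim n\delta_{n}^{2}$ uniformly in $u \in B_{n}$.

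For the numerator, interchanging $\mathbb{E}_{0}$ with the $u$-integration (Tonelli, $L_{n} \ge 0$) and using $\mathbb{E}_{0}L_{n}(u) = 1$ gives $\mathbb{E}_{0}\int_{S_{n}^{c}} L_{n}(u)\, \mu_{n}^{0}(du) = \mu_{n}^{0}(S_{n}^{c})$. For the denominator, restrict the integral to $B_{n}$ and apply Jensen's inequality to the convex function $\exp$ and the probability measure $\mu_{n}^{0}(\cdot \cap B_{n})/\mu_{n}^{0}(B_{n})$:
\begin{align*}
\int_{H} L_{n}(u)\, \mu_{n}^{0}(du) \ \ge\ \mu_{n}^{0}(B_{n})\exp\!\bigg( \frac{1}{\mu_{n}^{0}(B_{n})}\int_{B_{n}}\log L_{n}(u)\, \mu_{n}^{0}(du) \bigg) =: \mu_{n}^{0}(B_{n})\, e^{Z_{n}}.
\end{align*}
Interchanging $\mathbb{E}_{0}$ with the $u$-integral yields $\mathbb{E}_{0}Z_{n} \gtrsim -n\delta_{n}^{2}$, and Jensen's inequality applied to $x \mapsto x^{2}$ together with the variance bound above gives $\mathrm{Var}_{0}(Z_{n}) \le \mu_{n}^{0}(B_{n})^{-1}\int_{B_{n}}\mathrm{Var}_{0}(\log L_{n}(u))\, \mu_{n}^{0}(du) \lesssim n\delta_{n}^{2}$. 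Hence, fixing any $c \in (1,2)$, Chebyshev's inequality shows that the event $A_{n} := \{\int_{H} L_{n}(u)\, \mu_{n}^{0}(du) \ge \mu_{n}^{0}(B_{n})e^{-cn\delta_{n}^{2}}\}$ satisfies
\begin{align*}
\mathbb{P}_{0}(A_{n}^{c}) \le \mathbb{P}_{0}\big( Z_{n} < \mathbb{E}_{0}Z_{n} - (c-1)n\delta_{n}^{2} \big) \lesssim \frac{1}{n\delta_{n}^{2}}.
\end{align*}

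Combining the two estimates, on $A_{n}$ we use the denominator bound and on $A_{n}^{c}$ we use $\mu_{\hat{\alpha}_{n}}^{d}(S_{n}^{c}) \le 1$:
\begin{align*}
\mathbb{E}_{0}\mu_{\hat{\alpha}_{n}}^{d}(S_{n}^{c}) &\le \frac{e^{cn\delta_{n}^{2}}}{\mu_{n}^{0}(B_{n})}\, \mathbb{E}_{0}\int_{S_{n}^{c}} L_{n}(u)\, \mu_{n}^{0}(du) + \mathbb{P}_{0}(A_{n}^{c}) \\
&= e^{cn\delta_{n}^{2}}\, \frac{\mu_{n}^{0}(S_{n}^{c})}{\mu_{n}^{0}(B_{n})} + \mathbb{P}_{0}(A_{n}^{c}).
\end{align*}
By the hypothesis of the lemma the first term is $\lesssim e^{cn\delta_{n}^{2}}e^{-2n\delta_{n}^{2}} = e^{-(2-c)n\delta_{n}^{2}}$, which vanishes because $2-c > 0$ and $n\delta_{n}^{2} \to \infty$ for the rates $\delta_{n}$ encountered later; the second term vanishes by the previous display. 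This yields $\mathbb{E}_{0}\mu_{\hat{\alpha}_{n}}^{d}(S_{n}^{c}) \to 0$, as claimed.

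The only genuinely non-routine ingredient is the high-probability lower bound for the evidence, $\int_{H} L_{n}(u)\, \mu_{n}^{0}(du) \gtrsim \mu_{n}^{0}(B_{n})e^{-cn\delta_{n}^{2}}$, that is, controlling both the mean and the fluctuations of the averaged log-likelihood ratio $Z_{n}$ over $B_{n}$ purely through the two inequalities defining $B_{n}$ in (\ref{set2}); this is precisely the content of Lemma~1 of \cite{Ghosal2007AS}, and the ``adaptation'' amounts only to writing $L_{n}(u)$ through the explicit Gaussian Radon--Nikodym density and invoking $\mathbb{E}_{0}L_{n}(u)=1$. I would also flag that the statement tacitly presupposes $n\delta_{n}^{2}\to\infty$: otherwise neither $\mathbb{P}_{0}(A_{n}^{c})$ nor $e^{-(2-c)n\delta_{n}^{2}}$ is forced to $0$, and this is worth stating explicitly or absorbing into the choice of $\delta_{n}$ made in the applications.
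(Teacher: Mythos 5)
Your argument is exactly the evidence--lower--bound scheme of Lemma~1 in Ghosal and van der Vaart (2007), which is precisely what the paper invokes here: the paper supplies no proof of its own beyond the remark that that lemma ``can be adapted a little,'' so your reconstruction (Tonelli plus $\mathbb{E}_{0}L_{n}(u)=1$ for the numerator; Jensen plus Chebyshev on the averaged log-likelihood over $B_{n}$ for the denominator) is the intended proof, and your observation that the conclusion tacitly needs $n\delta_{n}^{2}\to\infty$ is correct and is indeed satisfied by the $\tilde{\epsilon}_{n}$ used in Theorem~\ref{mainTheorem}.

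The one point you should not pass over is the content of the paper's phrase ``adapted a little'': the prior $\mu_{n}^{0}=\mathcal{N}(0,\mathcal{C}^{\hat{\alpha}_{n}})$ is \emph{data-dependent}, since $\hat{\alpha}_{n}$ is the maximum-likelihood-type estimator of (\ref{gujiAl1}). Your Tonelli step $\mathbb{E}_{0}\int_{S_{n}^{c}}L_{n}(u)\,\mu_{n}^{0}(du)=\mu_{n}^{0}(S_{n}^{c})$, and likewise the computation of $\mathbb{E}_{0}Z_{n}$ and $\mathrm{Var}_{0}(Z_{n})$, are only valid for a deterministic prior; with a random $\hat{\alpha}_{n}$ the interchange of $\mathbb{E}_{0}$ with the $u$-integral fails as written. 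The standard repair in the empirical Bayes setting is to work on the event $\{\hat{\tilde{\alpha}}_{n}\in I_{n}\}$, whose probability tends to one by (\ref{allhn4}), bound $\mu^{d}_{\hat{\alpha}_{n}}(S_{n}^{c})\le\sup_{\tilde{\alpha}\in I_{n}}\mu^{d}_{\alpha}(S_{n}^{c})$ there, and then run your argument for each fixed $\alpha$ with the ratio hypothesis holding uniformly over the (deterministic) range $I_{n}$, controlling the supremum e.g.\ via a union bound over a discretisation of $I_{n}$ or via monotonicity/continuity in $\alpha$ of the quantities involved. This is a genuine additional step, not a cosmetic one, and it is the same gap the paper itself leaves open; your proof is otherwise complete and correct for a fixed prior.
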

Relying on Lemma \ref{cond1lemma}, we finally obtain our main result as follows.
\begin{theorem}\label{mainTheorem}
For every $R > 0, \delta>0$, $M_n\rightarrow\infty$, $C_{3}:= \frac{hC_{2}}{h+2p(\Delta-1)}$ and an arbitrarily small positive constant
$\epsilon > 0$, we have
\begin{align}\label{mainThC1}
\sup_{m_{s}^{\dag}\in S^{\tilde{\beta}}(R)\cap \Theta_{ss}^{\tilde{\beta}}(R,\delta)}
\mathbb{E}_{0}\mu_{\hat{\alpha}_{n}}^{d}\left\{ u \,: \, \|u - u^{\dag}\| \geq M_{n}\epsilon_{n} \right\} \rightarrow 0, \quad
\text{as}\,\,\, n\rightarrow \infty,
\end{align}
where
\begin{align*}
\epsilon_{n} = \left( \log n \right)^{C_{2}+C_{3}+1}n^{\frac{-p\gamma}{h+2p(\gamma+\Delta-1)+\epsilon}}.
\end{align*}
\end{theorem}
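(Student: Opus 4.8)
The plan is to assemble the final rate from the transformed-problem rate of Theorem \ref{converTheo2} together with the general transfer result of \cite{Salomond2018B}, using Lemma \ref{cond1lemma} to control the relevant prior mass ratio. First I would invoke the framework of \cite{Salomond2018B}: since the posterior is Gaussian and the truth $u^{\dag}\in\mathcal{H}^{\gamma}$, the abstract posterior contraction statement there reduces the problem to two ingredients, namely a contraction estimate in the ``data'' norm $\|\mathcal{T}(u-u^{\dag})\|$ and a prior-mass condition of the form handled by Lemma \ref{cond1lemma}. The first ingredient is exactly Theorem \ref{converTheo2}, which gives contraction at rate $\tilde{\epsilon}_{n}=L_{n}n^{-\tilde{\beta}/(1+2\tilde{\beta}+\tilde{\epsilon})}$ in the $\|\mathcal{T}\cdot\|$ norm; the second ingredient is the prior-mass bound $\mu_{n}^{0}(S_{n}^{c})/\mu_{n}^{0}(B_{n}(\mathcal{A}^{-1}u^{\dag},\delta_{n}))\lesssim\exp(-2n\delta_{n}^{2})$, which by (\ref{set3}) involves the same $\|\mathcal{T}\cdot\|$ ball.

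Next I would carry out the estimate of the two prior masses appearing in Lemma \ref{cond1lemma}, choosing $k_{n}$, $\rho_{n}$, $c$ and $\delta_{n}$ appropriately. The denominator $\mu_{n}^{0}(B_{n}(\mathcal{A}^{-1}u^{\dag},\delta_{n}))$ is the $\mathcal{N}(0,\mathcal{C}^{\hat\alpha_{n}})$-mass of the $\|\mathcal{T}(u-u^{\dag})\|\le\delta_{n}$ ball; using Assumptions 2 (in particular statement (2), which makes $\|\mathcal{T}u\|$ comparable to $\|\mathcal{C}^{(\Delta-1)/2}u\|$) together with the small-ball estimates for Gaussian measures on Hilbert scales, this reduces to a diagonal computation governed by the eigenvalue decay $\lambda_i^2\asymp i^{-2p/d}$, the prior regularity $\hat\alpha_n$ (which by (\ref{estAlp2}) is essentially $\gamma+d/2p$ up to a $\log\log n/\log n$ correction), and the truth regularity $\gamma$. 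The numerator $\mu_{n}^{0}(S_{n}^{c})$ is a Gaussian tail $\sum_{i\ge k_n}u_i^2>c\rho_n^2$, controlled by a standard Markov/Borell-type bound. Balancing these two, with $\delta_{n}$ of the order of the transformed rate $\tilde\epsilon_{n}$ and $k_n$ chosen so that the truncation level $\rho_n$ matches $\epsilon_n$, yields the claimed $\epsilon_n$: the exponent $-p\gamma/(d+2p(\gamma+\Delta-1)+\epsilon)$ comes from translating $-\tilde\beta/(1+2\tilde\beta+\tilde\epsilon)$ with $\tilde\beta=\frac{p}{d}(\gamma+\Delta-1)$ through the relation between $\|u-u^\dag\|$ and $\|\mathcal{T}(u-u^\dag)\|$ on the tail set $S_n$, and the extra $(\log n)^{C_3}$ factor arises from the $\log\log n/\log n$ slack in $\hat\alpha_n$ (hence in $\hat\alpha_n+\Delta-1$) feeding into the small-ball exponent, with $C_3=\frac{dC_2}{2p(1+\frac{2p}{d}(\Delta-1))}$.

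Finally I would combine the pieces: Lemma \ref{cond1lemma} gives $\mathbb{E}_{0}\mu_{\hat\alpha_n}^d(S_n^c)\to 0$, so with high probability the posterior concentrates on $S_n=\{u:\sum_{i\ge k_n}u_i^2\le c\rho_n^2\}$; on $S_n$ the low-frequency part $\sum_{i<k_n}u_i^2$ is controlled by $\|\mathcal{T}(u-u^\dag)\|^2$ up to a factor involving $k_n^{2(\Delta-1)p/d}$ (again via Assumptions 2(2) and interpolation, Lemma \ref{L2Inter}), which Theorem \ref{converTheo2} bounds; adding the two contributions and optimizing over $k_n$ produces the stated $\epsilon_n$. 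The main obstacle I anticipate is the prior small-ball lower bound for the denominator $\mu_n^0(B_n(\mathcal{A}^{-1}u^\dag,\delta_n))$: because the prior $\mathcal{C}^{\hat\alpha_n}$ has a \emph{random, data-dependent} regularity index $\hat\alpha_n$ lying only in the interval $I_n$, the small-ball estimate must be made uniform over $\hat\alpha_n\in I_n$ and must track precisely how the $\log\log n/\log n$ width of $I_n$ enters the exponential rate — this is exactly where the power $C_3$ of the logarithmic factor is pinned down, and getting the non-diagonal covariance $\mathcal{M}(\hat\alpha_n)$ to cooperate here (rather than the diagonal $\mathcal{C}^{\hat\alpha_n+\Delta-1}$) requires invoking the comparability statements (2)--(7) of Assumptions 2 rather than a direct eigenvalue computation.
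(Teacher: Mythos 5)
Your proposal is correct and follows essentially the same route as the paper: Theorem \ref{converTheo2} supplies the contraction in the $\|\mathcal{T}(\cdot)\|$ norm, Lemma \ref{cond1lemma} with the sieve $S_{n}$ and the set $B_{n}(\mathcal{A}^{-1}u^{\dag},\tilde{\epsilon}_{n})$ handles the prior-mass ratio (via the bounds of Lemmas 5.1--5.2 of \cite{Salomond2018B}), and on $S_{n}$ one interpolates $\|u-u^{\dag}\|$ between $\lambda_{k_{n}}^{-(\Delta-1)}\|\mathcal{T}(u-u^{\dag})\|$, $\rho_{n}$ and the bias $\lambda_{k_{n}}^{\gamma}$ before optimizing $k_{n}$. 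Two small clarifications: the prior on $u$ is the diagonal $\mathcal{N}(0,\mathcal{C}^{\hat{\alpha}_{n}})$, so the small-ball computation never involves the non-diagonal $\mathcal{M}(\hat{\alpha}_{n})$ (your anticipated obstacle does not arise), and the factor $(\log n)^{C_{3}}$ enters when the lower bound (\ref{estAlp2}) on $\hat{\alpha}_{n}$ is substituted into the exponent defining $\rho_{n}$, rather than through the small-ball exponent itself.
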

\begin{remark}
Excluding the logarithmic terms, the obtained contraction rate can be rewritten as follows:
\begin{align}\label{rem1}
n^{-\frac{\frac{p}{h}\gamma}{1+2\frac{p}{h}(\gamma+\Delta-1)+\frac{\epsilon}{h}}}. 
\end{align}
According to \cite{Knapik2011AS,Salomond2018B,Agapiou2018S}, the optimal contraction rate of the artifical diagonal problem is $n^{-\frac{\tilde{\beta}}{1+2\tilde{\beta}}}$ that can be rewritten as follows:
\begin{align}\label{rem2}
n^{-\frac{\frac{p}{h}(\gamma+\Delta-1)}{1+2\frac{p}{h}(\gamma+\Delta-1)}}. 
\end{align}
Based on the analysis shown in \cite{Salomond2018B} (Theorem 3.1 in \cite{Salomond2018B}), we know that the optimal contraction rate should be
\begin{align}\label{rem3}
n^{-\frac{\frac{p}{h}\gamma}{1+2\frac{p}{h}(\gamma+\Delta-1)}},
\end{align}
when $\mathcal{A}^{-1}$ and $\Sigma$ can be diagonalized under the eigensystem proposed in Assumptions 1. 
Comparing (\ref{rem1}) with (\ref{rem3}), we conclude that Theorem \ref{mainTheorem} provides an almost optimal contraction rate up to some 
logarithmic terms. 
\end{remark}

\section{Examples}\label{ExampleSection}
In this section, we provide some nontrivial examples.
For simplicity, we consider a simple closed manifold that is $h$-dimensional torus denoted by $\mathbb{T}^{h}$.
Introduce the Hilbert space $H$ defined as follows:
\begin{align*}
H := \dot{L}^{2}(\mathbb{T}^{h}) = \Big\{
u \,:\, \mathbb{T}^{h} \rightarrow \mathbb{R} \,\, \Big| \,\, \int_{\mathbb{T}^{h}}|u(x)|^{2}dx < \infty, \,
\int_{\mathbb{T}^{h}}u(x)dx = 0 \Big\}
\end{align*}
of real valued periodic function $h \leq 3$ with inner-product and norm denoted by $(\cdot,\cdot)$ and $\|\cdot\|$ respectively.
Let $\mathcal{A}_{0} := -\Delta$ be the negative Laplacian equipped with periodic boundary condition on $[0,1)^{h}$, and
restricted to functions with integrate to zero over $[0,1)^{h}$.
It is well-known that this operator is positive self-adjoint and has eigensystem $\{\rho_{j}^{2}, \phi_{j}\}_{j = 1}^{\infty}$.
The eigenfunctions $\{\phi_{j}\}_{j=1}^{\infty}$ constitute the Fourier basis and form a complete orthonormal basis of $H$ and
the eigenvalues $\rho_{j}^{2}$ behave asymptotically like $j^{2/h}$.

Before going further, let us provide two basic definitions concerned with the pesudodifferential operators.
For further details, we refer to \cite{Kekkonen2016IP,Shubin1987book}.
\begin{definition}
Let $m\in\mathbb{R}$. Then $S^{m}(\mathbb{R}^{h}, \mathbb{R}^{h})$ is the vector-space of all smooth functions $a\in C^{\infty}(\mathbb{R}^{h},\mathbb{R}^{h})$
such that
\begin{align*}
|\partial_{\xi}^{\alpha}\partial_{x}^{\beta}a(x,\xi)| \leq C_{\alpha,\beta,K}(1+|\xi|)^{m-|\alpha|}, \quad \xi\in\mathbb{R}^{h}, \,\,\,x\in K,
\end{align*}
holds for all multi-indices $\alpha$ and $\beta$ and any compact set $K \subset \mathbb{R}^{h}$.
\end{definition}
\begin{definition}
Let $Y : U\rightarrow \mathbb{R}^{h}$ be local coordinates of the manifold $N$. A bounded linear operator $A : \mathcal{D}'(N) \rightarrow \mathcal{D}'(N)$
is called a pseudodifferential operator if for any local coordinates $Y : U \rightarrow \mathbb{R}^{h}$, $U\subset N$,
there is a symbol $a\in S^{m}(\mathbb{R}^{h},\mathbb{R}^{h})$ such that for $u\in C_{0}^{\infty}(U)$ we have
\begin{align*}
Au(y_{1}) = \int_{N}k_{A}(y_{1},y_{2})u(y_{2})dV_{g}(y_{2}),
\end{align*}
where $k_{A}|_{N\times N\backslash\text{diag}(N)}\in C^{\infty}(N\times N\backslash \text{diag}(N))$ and
$\text{diag}(N)=\{(y,y)\in N\times N | y\in N\}$. Also when $Y:U\rightarrow V\subset\mathbb{R}^{h}$ are local $C^{\infty}$-smooth coordinates
$k_{A}(y_{1},y_{2})$ is given on $U\times U$ by
\begin{align*}
k_{A}(Y^{-1}(x_{1}),Y^{-1}(x_{2})) = \int_{\mathbb{R}^{h}}e^{i(x_{1}-x_{2})\xi}a(x_{1},\xi)d\xi,
\end{align*}
where $x_{1},x_{2}\in V\subset \mathbb{R}^{h}$ and $a = a_{V} \in S^{m}(V,\mathbb{R}^{h})$.
In this case we will write
\begin{align*}
A \in \Psi^{m}(N),
\end{align*}
and say that in local coordinates $Y : U\rightarrow V\subset \mathbb{R}^{h}$ the operator $A$ has the symbol $a(x,\xi)\in S^{m}(V\times\mathbb{R}^{h})$.
\end{definition}

Following the proof of Lemma 1 in \cite{Kekkonen2016IP}, we can obtain the following lemma.
\begin{lemma}\label{boundPesudoLemma}
Let $B \in \Psi^{-2t}(\mathbb{T}^{h})$ be an injective elliptic pseudodifferential operator. Then we have the following estimates
\begin{align*}
\|Bu\|_{\dot{H}^{r+2t}(\mathbb{T}^{h})} \lesssim \|u\|_{\dot{H}^{r}(\mathbb{T}^{h})} \lesssim \|Bu\|_{\dot{H}^{r+2t}(\mathbb{T}^{h})},
\quad \forall \,\, r\in\mathbb{R}.
\end{align*}
\end{lemma}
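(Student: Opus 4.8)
The plan is to follow the classical elliptic estimate on a compact manifold (as in the proof of Lemma~1 of \cite{Kekkonen2016IP}), carried out on the homogeneous, mean-zero Sobolev scale $\dot{H}^{s}(\mathbb{T}^{d})$. The left-hand inequality is simply the continuity of pseudodifferential operators on Sobolev scales: since $B\in\Psi^{-2t}(\mathbb{T}^{d})$, the standard mapping property (see \cite{Shubin1987book}) gives that $B:\dot{H}^{r}(\mathbb{T}^{d})\rightarrow\dot{H}^{r+2t}(\mathbb{T}^{d})$ is bounded, i.e. $\|Bu\|_{\dot{H}^{r+2t}(\mathbb{T}^{d})}\lesssim\|u\|_{\dot{H}^{r}(\mathbb{T}^{d})}$ for every $r\in\mathbb{R}$.

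For the right-hand inequality I would use ellipticity to construct a left parametrix. Since $B$ is an elliptic pseudodifferential operator of order $-2t$, there is $P\in\Psi^{2t}(\mathbb{T}^{d})$ with $PB=I+S$, where $S\in\Psi^{-\infty}(\mathbb{T}^{d})$ is smoothing. Writing $u=PBu-Su$ and applying the mapping properties again — $P:\dot{H}^{r+2t}\rightarrow\dot{H}^{r}$ bounded, and $S:\dot{H}^{r-1}\rightarrow\dot{H}^{r}$ bounded (in fact $S$ maps any $\dot{H}^{s}$ into any $\dot{H}^{s'}$) — yields the a priori estimate
\[
\|u\|_{\dot{H}^{r}(\mathbb{T}^{d})}\lesssim\|Bu\|_{\dot{H}^{r+2t}(\mathbb{T}^{d})}+\|u\|_{\dot{H}^{r-1}(\mathbb{T}^{d})}.
\]
It remains to absorb the lower-order term. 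Here I would invoke the compactness of the embedding $\dot{H}^{r}(\mathbb{T}^{d})\hookrightarrow\dot{H}^{r-1}(\mathbb{T}^{d})$ (Rellich's theorem, valid because $\mathbb{T}^{d}$ is compact and $-\Delta$ is invertible on the mean-zero subspace) together with the injectivity of $B$: a standard Peetre-type argument then removes the term $\|u\|_{\dot{H}^{r-1}}$. Concretely, if the desired bound failed, there would be $u_{n}$ with $\|u_{n}\|_{\dot{H}^{r}}=1$ and $\|Bu_{n}\|_{\dot{H}^{r+2t}}\rightarrow 0$; by compactness a subsequence converges in $\dot{H}^{r-1}$, the a priori estimate forces that subsequence to be Cauchy, hence convergent, in $\dot{H}^{r}$ to some $u$ with $\|u\|_{\dot{H}^{r}}=1$ and $Bu=0$, contradicting injectivity.

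The step I expect to be the main obstacle is precisely this final absorption: one has to pass from ``$B$ injective'' to a genuine stability estimate, which is where compactness of the manifold and the Fredholm property of elliptic operators enter — equivalently, $\ker B$ is a finite-dimensional space of smooth functions and is trivial by hypothesis. A secondary, minor point requiring a line of care is that the parametrix $P$ and the remainder $S$ can be taken to act on the fixed closed subspace $\dot{L}^{2}(\mathbb{T}^{d})$ of mean-zero functions, so that the identity $I$ in $PB=I+S$ and all the $\dot{H}^{s}$-norms refer consistently to that subspace.
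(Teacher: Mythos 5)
Your proof is correct and follows essentially the route the paper intends: the paper gives no proof of its own but simply states that the lemma is obtained "following the proof of Lemma 1 in \cite{Kekkonen2016IP}", and your argument — Sobolev boundedness of $B\in\Psi^{-2t}$ for the first inequality, then a parametrix, the resulting a priori estimate, and a Rellich-compactness-plus-injectivity contradiction to absorb the lower-order term for the second — is precisely the standard elliptic argument behind that cited lemma. Your closing remarks about the finite-dimensional smooth kernel and about working consistently on the mean-zero subspace are the right points of care and do not affect correctness.
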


\subsection{Example 1--non-diagonal forward operator}\label{Ex1SubSection}
Let $\mathcal{M}_{q}: H \rightarrow H$ be the multiplication operator by a nonnegative function $q \in C^{\infty}(\mathbb{T}^{h})$.
We define the forward operator $\mathcal{A}^{-1} := (\mathcal{A}_{0} + \mathcal{M}_{q})^{-1}$, assume that the observational noise is white, 
so that $\Sigma = I$, and we set the operator $\mathcal{C} = \mathcal{A}_{0}^{-2}$.

Denote the eigenvalues of the operator $\mathcal{C}$ to be $\{\lambda_{j}^{2}\}_{j=1}^{\infty}$.
Obviously, we have $\lambda_{j}^{2} = \rho_{j}^{-4}$. Since $\sum_{j=1}^{\infty}\lambda_{j}^{2} \asymp \sum_{j=1}^{\infty}j^{-\frac{4}{h}} < \infty$ for $h \leq 3$,
the operator $\mathcal{C}$ is trace class and the constant $p$ appeared in Assumptions 1 is equal to $2$.

Our aim is to show $\Sigma\simeq \mathcal{C}^{\beta}$ and $\mathcal{A}^{-1}\simeq \mathcal{C}^{\ell}$, where $\beta = 0$ and
$\ell = \frac{1}{2}$ in the sense specified in Assumptions 2. Obviously, we know $\Delta = 2\ell - \beta + 1 = 2$.
Under these settings, we easily obtain the equivalent relations of Hilbert scales and Sobolev space as follows:
\begin{align*}
\mathcal{H}^{t} = \dot{H}^{2t}(\mathbb{T}^{h}).
\end{align*}
Now, we verify all of the conditions appeared in Assumptions 2.
\begin{description}
  \item[(2)] For this condition, we need to prove the following statement
  \begin{align*}
  \|\mathcal{A}^{-1}u\|_{\dot{H}^{2r}(\mathbb{T}^d)} \asymp \|u\|_{\dot{H}^{2r-2}(\mathbb{T}^d)}, \quad \forall \,\, r\geq 0.
  \end{align*}
  The forward operator $\mathcal{A}^{-1}$ is a pseudodifferential operator with symbol
  $(|\xi|^{2} + q(x))^{-1}$. That is to say, $\mathcal{A}^{-1} \in \Psi^{-2}(\mathbb{T}^{h})$ and $\mathcal{A}^{-1}$ is an injective
  elliptic pseudodifferential operator. Through Lemma \ref{boundPesudoLemma}, we complete the verification.
  \item[(3)] The operator $\mathcal{C}^{\frac{\alpha}{2}}\mathcal{T}^{*} = \mathcal{C}^{\frac{\alpha}{2}}\mathcal{A}^{-1}$ is a pseudodifferential operator with symbol
  $|\xi|^{-2\alpha}(|\xi|^{2}+q(x))^{-1}$.
  Hence, we have $\mathcal{C}^{\frac{\alpha}{2}}\mathcal{T}^{*} \in \Psi^{-2(1+\alpha)}(\mathbb{T}^d)$
  which implies
  \begin{align*}
  \|\mathcal{C}^{\frac{\alpha}{2}}\mathcal{T}^{*}u\|_{\dot{L}^{2}(\mathbb{T}^{h})} \lesssim \|u\|_{\dot{H}^{-2(1+\alpha)}(\mathbb{T}^{h})}.
  \end{align*}
  \item[(4)] To verify this condition, let us notice that the operator
  $$\mathcal{C}^{-(\alpha+\Delta-1)}\mathcal{M}(\alpha) \in \Psi^{0}(\mathbb{T}^{h}),$$
  which indicates
  \begin{align*}
  \|\mathcal{C}^{-(\alpha+\Delta-1)}\mathcal{M}(\alpha)u\| \lesssim \|u\|_{\dot{L}^{2}(\mathbb{T}^{h})}, \quad \forall\,\, u\in \dot{L}^{2}(\mathbb{T}^{h}).
  \end{align*}
\end{description}
Conditions (5) and (6) can be verified similarly, here, we omit the details for concisely.
By choosing the regularity index according to (\ref{gujiAl1}), we can apply Theorem \ref{mainTheorem}
to obtain the following convergence result.
\begin{theorem}\label{Ex1Theorem}
Let $u^{\dag}\in \dot{H}^{2\gamma}(\mathbb{T}^{h})$, $\gamma \geq 1$. Then, the convergence in (\ref{mainThC1}) holds with
\begin{align}\label{Ex1Th1}
\tilde{\epsilon}_{n} = \left( \log n \right)^{C_{2}+C_{3}+1}n^{\frac{-2\gamma}{4+h+4\gamma+\epsilon}},
\end{align}
for some large enough constants $C_{2}$ and $C_{3}$ and arbitrarily small constant $\epsilon>0$.
\end{theorem}
Comparing with Theorem 8.2 proved in \cite{Agapiou2013IP}, we notice that this convergence rate is optimal up to a slowly varying factor
and an arbitrarily small positive constant $\epsilon > 0$.

\subsection{Example 2--a fully non-diagonal example}
As in Subsection \ref{Ex1SubSection}, let the forward operator to be $\mathcal{A}^{-1} = (\mathcal{A}_{0} + \mathcal{M}_{q})^{-1}$.
We assume that the observational noise is Gaussian with covariance operator $\Sigma = (\mathcal{A}_{0} + \mathcal{M}_{r})^{-2}$,
where $\mathcal{M}_{r}$ is the multiplication operator by a nonnegative function $r\in C^{\infty}(\mathbb{T}^h)$.
We also assume $\mathcal{C} = \mathcal{A}_{0}^{-2}$.

Under this setting, we would like to verify Assumptions 2 with $\ell = \frac{1}{2}$ and $\beta = 1$.
We easily know that $\mathcal{A}^{-1}\in \Psi^{-2}(\mathbb{T}^h)$, $\Sigma\in\Psi^{-4}(\mathbb{T}^h)$ and $\mathcal{C}^{\alpha}\in\Psi^{-4\alpha}(\mathbb{T}^h)$.
Assumptions 2 
can be verified through similar analysis shown in Subsection \ref{Ex1SubSection}.
Then we apply Theorem \ref{mainTheorem} to obtain the following convergence result.
\begin{theorem}\label{Ex2Theorem}
Let $u^{\dag}\in \dot{H}^{2\gamma}(\mathbb{T}^{h})$, $\gamma \geq 1$. Then, the convergence in (\ref{mainThC1}) holds with
\begin{align}\label{Ex2Th1}
\tilde{\epsilon}_{n} = \left( \log n \right)^{C_{2}+C_{3}+1}n^{\frac{-2\gamma}{h+4\gamma+\epsilon}},
\end{align}
for some large enough constants $C_{2}$ and $C_{3}$ and arbitrarily small constant $\epsilon > 0$.
\end{theorem}
\begin{remark}
For Example 2, we may transform the problem to the form of (\ref{S14}) with the white noise and operator 
$\mathcal{T} = (\mathcal{A}_0 + \mathcal{M}_r)\circ (\mathcal{A}_0 + \mathcal{M}_q)^{-1}$. Comparing the 
forward operator $(\mathcal{A}_0 + \mathcal{M}_q)^{-1}$ in Example 1 with the operator $\mathcal{T}$ in Example 2, we 
intuitively find that the degree of ill-posedness for Example 1 is larger than that for Example 2. 
Hence, it is predictable that the optimal posterior contraction rate for Example 2 will be faster than the one for Example 1. 
The results presented in Theorems \ref{Ex1Theorem} and \ref{Ex2Theorem} justify the above intuitive illustrations quantitatively and rigorously. 
\end{remark}

\section{Numerical illustration}\label{NumSection}
In this section, we plan to give some numerical results that illustrate the effectiveness of the proposed empirical Bayes' method.
For the high-dimensional problems, how to discretize the fractional Laplace operator
is an activate research problem \cite{Ilic2005FCAA}. Effective numerical implementation is not the main
focus of this paper, so we set the space dimension $h=1$ for simplicity.
Recall the basic settings in Subsection \ref{Ex1SubSection}, 
we know that $p=2$, $\beta=0$, $\ell=\frac{1}{2}$, and $\Delta=2$. 
In the following, we take $\gamma$ given in Theorem \ref{Ex1Theorem} equal to $1$. 
We choose the true function $u^{\dag}\in\mathcal{H}^{\gamma}$ with the following form
\begin{align}
u^{\dag} = \sum_{i=1}^{\infty}i^{-2\gamma - \frac{1}{2}-\epsilon}\phi_i,
\end{align}
where $\{\phi_i\}_{i=1}^{\infty}$ are the eigenbasis introduced in Assumptions 1 
and $\epsilon>0$ is a fixed small number. 

\begin{figure}[htbp]
	\centering
	\includegraphics[width=0.75\textwidth]{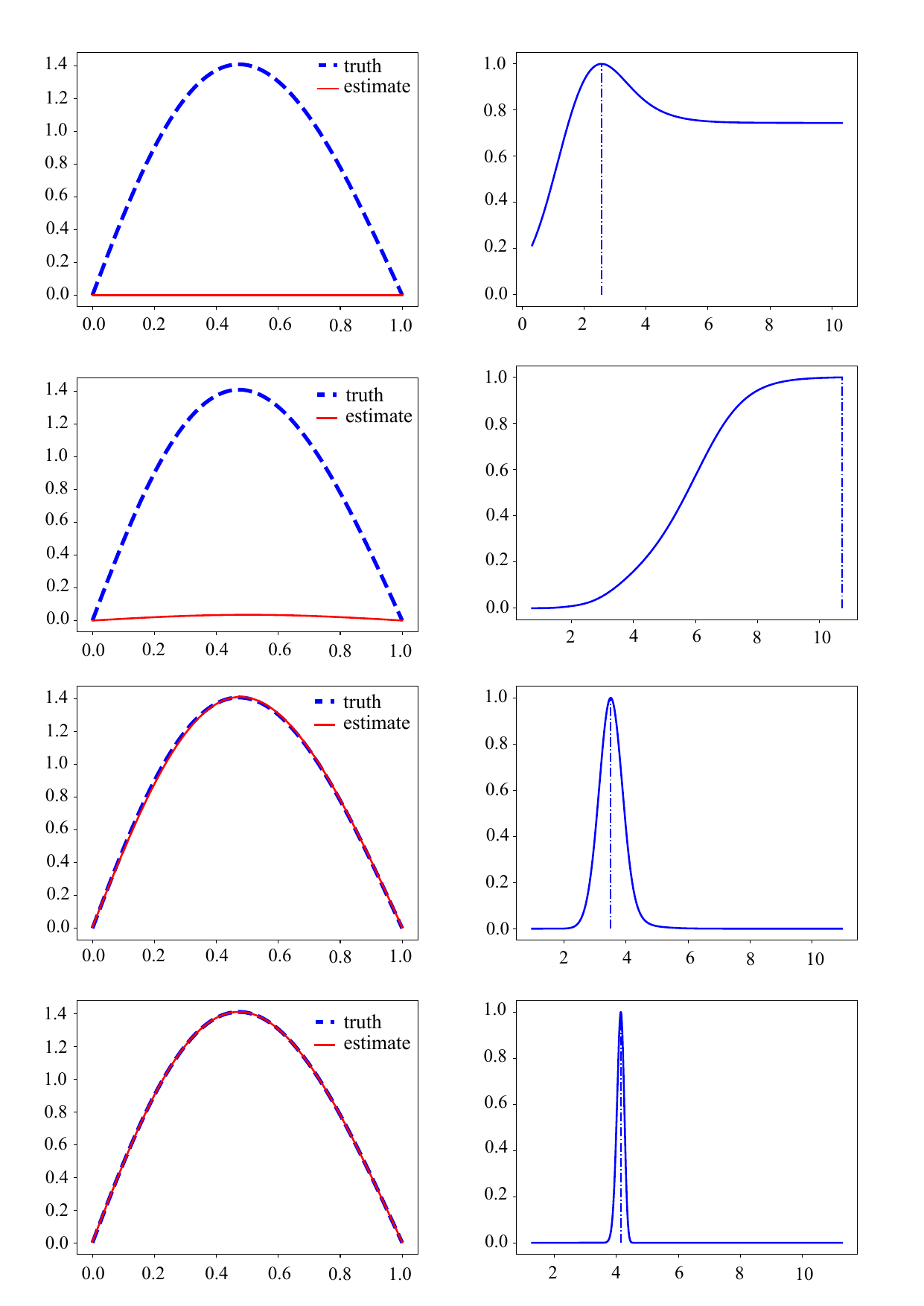}\\
	\vspace*{-0.6em}
	\caption{Left panels the empirical Bayes posterior mean (red) and the true curve (blue, dashed).
	Right panels corresponding normalized likelihood for $\hat{\tilde{\alpha}}$ (regularity index for the artificial diagonal problem). 
	We have $n = 10^3, 10^5, 10^8,$ and $10^{12}$, from top to bottom.}\label{Fig1}
\end{figure}

The forward operator $\mathcal{A}^{-1}$ and the covariance operator $\mathcal{C}$ are discretized by employing finite element method
implemented with the open source software FEniCS (Version 2019.1.0). The related fractional operator $\mathcal{C}^{\alpha}$ with $\alpha>0$
is discretized through the method introduced in Subsection 3.4.1 in \cite{Thanh2016IPI}.
With these specified parameters, we easily know that $\tilde{\beta}$ introduced for the artificial diagonal problem should be equal to $4$. 
That is to say, the estimated $\hat{\tilde{\alpha}}_n$ should around $4$ when $n$ is large. 
In Figure \ref{Fig1}, we plot the true function $u^{\dag}$ (black dashed curve) and the empirical Bayes mean (red curve) 
in the left panels, and the corresponding normalized likelihood $\exp(\ell_n)/\max(\exp(\ell_n))$ in the right panels 
(we truncated the sum in (\ref{allhn1}) at a high level). Here, we regardless of the term $C_1\log\log n/\log n$ since this term tends to $0$
when $n\rightarrow \infty$. The figure shows that the estimated $\hat{\tilde{\alpha}}_n$ does a good job in this case at estimating
the regularity level $4$, at least for large enough $n$. Through formula (\ref{gujiAl1}), the proposed method provide an accurate 
estimate for the regularity index of the original problem. 

\section{Conclusions}\label{ConSection}

In this paper, we study an empirical Bayesian approach to a family of linear inverse problems.
We assume that the covariance operator of the prior Gaussian measure depends on a hyperparameter that quantifies the regularity.
We do not assume the prior covariance, noise covariance and forward operator have a common basis in their singular value decomposition,
which enlarge the application range compared with the existing results.
Under such weak assumptions, it is difficult to introduce maximum likelihood based estimation for the regularity index.
In order to construct an empirical Bayesian approach, we propose two auxiliary problems: transformed problem and artificial diagonal problem.
Because the regularity index only reflects the information about regularity, we provide a maximum likelihood estimation of the regularity index through
the artificial diagonal problem. Then we deduce the posterior contraction estimates for the transformed problem by exploring the relations between
the transformed problem and the artificial diagonal problem.
Finally, the desired posterior contraction estimates are obtained through a general approach developed recently.


For the posterior contraction rate estimate problem, there are numerous challenging problems need to be solved.
Only inverse problems with linear forward operator have been considered in this paper, how to design
empirical Bayesian methods for inverse problems with nonlinear forward operator is a difficult problem and
it seems to depend on the specific structures of the certain problem.

\section{Appendix}\label{proofSection}

Here we gather all of the proof details.

\vskip 0.3cm
\textbf{Proof of Lemma \ref{lemmaUpDownEst1}}
\begin{proof}
The proof of estimation (\ref{formulaUpDownEst1}) is similar to the proof of (i) of Lemma 1 presented in \cite{Knapik2016},
so we omit the details. For estimation (\ref{formulaUpDownEst2}),
let us denote $\tilde{\beta}_{n}:=\tilde{\beta}+C_{1}(\log\log n)/\log n$.
Employing Lemma 6.1 proved in \cite{Szabo2015AS}, we find that
\begin{align}\label{proofLe1}
h_{n}(\tilde{\beta}_{n}) \geq \frac{\epsilon R}{(\rho^{1+2\tilde{\beta}_{n}}+1)^{2}}n^{2(\tilde{\beta}_{n}-\tilde{\beta})/(1+2\tilde{\beta}_{n})}
\geq \frac{\epsilon R}{(\rho^{2+2\tilde{\beta}}+1)^{2}}n^{2(\tilde{\beta}_{n}-\tilde{\beta})/(1+2\tilde{\beta}_{n})}
\end{align}
holds for choosing $n$ large enough such that $C_{1}(\log\log n)/\log n \leq 1/2$.
Because $(\log\log n)/\log n \leq 1/4$ for a large enough $n$ ($n\geq5600$), we have
\begin{align}\label{proofLe2}
n^{2(\tilde{\beta}_{n}-\tilde{\beta})/(1+2\tilde{\beta}_{n})} \geq n^{\frac{1}{\log n}(\log\log n)\frac{2C_{1}}{1+2\tilde{\beta}+C_{1}/2}}
= (\log n)^{2C_{1}/(1+2\tilde{\beta}+C_{1}/2)}.
\end{align}
Hence, taking $C_{1} > 6 + 12\tilde{\beta}$, we obtain
$h_{n}(\tilde{\beta}_{n}) \geq L (\log n)^{2}$ for large $n$, which verifies estimation (\ref{formulaUpDownEst2}).
\end{proof}

\textbf{Proof of Theorem \ref{converTheo1}}
\begin{proof}
Because
\begin{align*}
\|\hat{m}_{dn}(\tilde{\alpha}) - m^{\dag}\|^{2} = \sum_{i=1}^{\infty}(\hat{m}_{dn,i}(\tilde{\alpha}) - m^{\dag}_{i})^{2},
\end{align*}
we just need to prove the following estimation
\begin{align}\label{convFor2proof}
\sup_{m_{s}^{\dag}\in S^{\tilde{\beta}}(R)\cap \Theta_{ss}^{\tilde{\beta}}(R,\delta)}\mathbb{E}_{0}\left\{
\sup_{\tilde{\alpha}\in \tilde{I}_{n}} \sum_{i=1}^{\infty}(\hat{m}_{i}(\tilde{\alpha}) - m^{\dag}_{i})^{2}
\right\} = O(\epsilon_{n}^{2}).
\end{align}
Inspired by Subsection 6.1 in \cite{Knapik2016}, the expectation can be split into square bias and variance terms as follows
\begin{align}\label{covTh1}
\sum_{i=1}^{\infty}\frac{i^{2+4\tilde{\alpha}}(m_{i}^{\dag})^{2}}{(i^{1+2\tilde{\alpha}}+n)^{2}} +
n\sum_{i=1}^{\infty}\frac{1}{(i^{1+2\tilde{\alpha}} + n)^{2}} = \text{I} + \text{II}.
\end{align}
Employing Lemma 8 in \cite{Knapik2016} (with $m = 0$, $\ell = 1$, $r = 1+2\tilde{\alpha}$ and $s = 0$), for large $n$, we find that
\begin{align}\label{covTh2}
\begin{split}
\text{II} \leq & \sum_{i=1}^{\infty}\frac{1}{i^{1+2\tilde{\alpha}} + n} \lesssim n^{-\frac{2\tilde{\alpha}}{1+2\tilde{\alpha}}} \\
\lesssim & \, n^{-\frac{2\tilde{\beta}-2C_2\log\log n/\log n}{1+2\tilde{\beta}-2C_2\log\log n/\log n}}
\lesssim (\log n)^{2C_{2}}n^{-\frac{2\tilde{\beta}}{1+2\tilde{\beta}}},
\end{split}
\end{align}
where we employed lower bound estimate in (\ref{estAlp1}). We should point out that $s$ is assumed to be positive in Lemma 8 in \cite{Knapik2016}. 
Through a straightforward verification, the estimates given in Lemma 8 in \cite{Knapik2016} still holds true when $s=0$.

For term $\text{I}$, it can be estimated by mimicking the procedures used in Subsection 6.1 in \cite{Knapik2016}
with lower bound estimate of $\tilde{\alpha}$ similar to (\ref{covTh2}). For reader's convenience, we provide some key steps in the following.
The summation in term $\text{I}$ will be divided into two parts that are $i>n^{1/(1+2\tilde{\beta})}$, 
$i\leq n^{1/(1+2\bar{\tilde{\alpha}}_n-2C_1\log\log n/\log n)}$. First we note that 
\begin{align}
\sum_{i>n^{1/(1+2\tilde{\beta})}} \frac{i^{2+4\tilde{\alpha}}(m_{i}^{\dag})^{2}}{(i^{1+2\tilde{\alpha}}+n)^{2}}
\leq \sum_{i>n^{1/(1+2\tilde{\beta})}} (m_{i}^{\dag})^{2} \leq n^{-\frac{2\tilde{\beta}}{(1+2\tilde{\beta})}}
\|m_{s}^{\dag}\|_{\tilde{\beta}}^{2}.
\end{align}
Next, elementary calculations indicate that for $\tilde{\alpha} > 0$ and $n\geq3$, the maximum of the function 
$i\mapsto i^{1+2\tilde{\alpha}}/\log i$ over the interval $[2, n^{1/(1+2\tilde{\alpha})}]$ is attained at 
$i = n^{1/(1+2\tilde{\alpha})}$, for $\tilde{\alpha} \leq \bar{\tilde{\alpha}}_n$. It follows that for $\tilde{\alpha} > 0$, 
\begin{align}
\begin{split}
& \sum_{i\leq n^{1/(1+2\tilde{\alpha})}}\frac{i^{2+4\tilde{\alpha}}(m_{i}^{\dag})^{2}}{(i^{1+2\tilde{\alpha}}+n)^2} \\
& = \frac{(m_{1}^{\dag})^{2}}{(1+n)^2} + \frac{1}{n^2}\sum_{2\leq i\leq n^{1/(1+2\tilde{\alpha})}}
\frac{((i^{1+2\tilde{\alpha}})/\log i)n^2 i^{1+2\tilde{\alpha}}(m_{i}^{\dag})^{2} \log i}{(i^{1+2\tilde{\alpha}} + n)^2} \\
& \leq \frac{(m_{1}^{\dag})^{2}}{(1+n)^2} + n^{-\frac{2\tilde{\alpha}}{1+2\tilde{\alpha}}}h_n(\tilde{\alpha}). 
\end{split}
\end{align}
Since $n^{1/(1+2\bar{\tilde{\alpha}}-2C_1\log\log n/\log n)} \leq n^{1/(1+2\tilde{\alpha})}$ 
for $\tilde{\alpha} \leq \bar{\tilde{\alpha}}_n -2C_1\log\log n/\log n$, the preceding implies that 
\begin{align}
\sup_{\tilde{\alpha}\in\tilde{I}_n}\sum_{i\leq n^{1/(1+2\bar{\tilde{\alpha}}_n-2C_1\log\log n/\log n)}}
\frac{i^{2+4\tilde{\alpha}}(m_{i}^{\dag})^{2}}{(i^{1+2\tilde{\alpha}}+n)^2} \lesssim \frac{1}{n^2} 
+ L n^{-\frac{2\underline{\tilde{\alpha}}_n}{1+2\underline{\tilde{\alpha}}_n}}\log^2 n,
\end{align}
where $L$ is the parameter given in (\ref{allhn3}). Relying on the lower bounded proved in Lemma \ref{lemmaUpDownEst1}, the right-hand
side of the above inequality is bounded by a constant times $n^{-2\tilde{\beta}/(1+2\tilde{\beta})}(\log n)^{2C_2 + 2}$. 
Because $\bar{\tilde{\alpha}}_n-C_1\log\log n/\log n \leq \tilde{\beta}$, the above two parts cover all of the possibilities,
which is a little bit different to the case in Subsection 6.1 in \cite{Knapik2016}. 
Combining estimates of terms I and II, we conclude the proof. 

\end{proof}

\textbf{Proof of Lemma \ref{weakSolLemma}}
\begin{proof}
We use the Lax-Milgram theorem \cite{Evans2010book} in the Hilbert space $H$. The coercive of $B$ can be illustrated as follows:
\begin{align*}
B(u,u) = n\|\mathcal{C}^{\alpha/2}\mathcal{T}^{*}u\|^{2} + \|u\|^{2} \geq \|u\|^{2}, \quad \forall\, u\in H.
\end{align*}
Using the statement (3) of Assumptions 2, we know that $\|\mathcal{C}^{\alpha/2}\mathcal{T}^{*}u\| \lesssim \|u\|_{\mathcal{H}^{-\alpha-(\Delta-1)}}$.
The continuity of the bilinear form $B$ can be derived as follows:
\begin{align*}
|B(u,v)| \leq n\|\mathcal{C}^{\alpha/2}\mathcal{T}^{*}u\|\|\mathcal{C}^{\alpha/2}\mathcal{T}^{*}v\| + \|u\|\|v\| \lesssim \|u\|\|v\|,
\quad \forall\, u,v \in H.
\end{align*}
So the first statement has been proved. For the second statement about regularities, we notice that
\begin{align*}
u = r - n\mathcal{M}(\alpha)u.
\end{align*}
Using statement (4) of Assumptions 2, we easily deduce $\mathcal{M}(\alpha)u \in \mathcal{H}^{2(\alpha+\Delta-1)}$ for $u\in H$.
Hence the right-hand side belongs to $\mathcal{H}^{t}$, which implies $u \in \mathcal{H}^{t}$ for $t \leq 2(\alpha+\Delta-1)$.
\end{proof}

\textbf{Proof of Lemma \ref{opEstLemma1}}
\begin{proof}
Let $h \in H$. Then $\mathcal{C}^{\frac{1}{2}(\alpha+\Delta-1)-\frac{s}{2}}h \in H$, since $\Delta \geq 1$ and $\alpha_{0} < s < \alpha$.
By Lemma \ref{weakSolLemma}, for $r = \mathcal{C}^{\frac{1}{2}(\alpha+\Delta-1)-\frac{s}{2}}h$, there exists a unique weak solution 
$u' \in H$ of (\ref{equation1}). 
Because $\mathcal{C}^{-\frac{1}{2}(\alpha+\Delta-1)+\frac{s}{2}}v \in H$ for $v \in \mathcal{H}^{(\alpha+\Delta-1)-s}$,
we conclude that for any $v \in \mathcal{H}^{(\alpha+\Delta-1)-s}$
\begin{align*}
& \Big(n^{\frac{1}{2}}\mathcal{C}^{\frac{\alpha}{2}}\mathcal{T}^{*}\mathcal{C}^{-\frac{1}{2}(\alpha+\Delta-1)+\frac{s}{2}}u,
n^{\frac{1}{2}}\mathcal{C}^{\frac{\alpha}{2}}\mathcal{T}^{*}\mathcal{C}^{-\frac{1}{2}(\alpha+\Delta-1)+\frac{s}{2}}v\Big) +  \\
& \quad\quad
\Big(\mathcal{C}^{-\frac{1}{2}(\alpha+\Delta-1)+\frac{s}{2}}u, \mathcal{C}^{-\frac{1}{2}(\alpha+\Delta-1)+\frac{s}{2}}v\Big) =
\Big(\mathcal{C}^{\frac{1}{2}(\alpha+\Delta-1)-\frac{s}{2}}h, \mathcal{C}^{-\frac{1}{2}(\alpha+\Delta-1)+\frac{s}{2}}v\Big),
\end{align*}
where $u = \mathcal{C}^{\frac{1}{2}(\alpha+\Delta-1)-\frac{s}{2}}u' \in \mathcal{H}^{(\alpha+\Delta-1)-s}$.
Choosing $v = u$, using statement (3) of Assumptions 2, for a generic constant $c>0$ and a small enough constant $\delta > 0$, we obtain
\begin{align*}
n\|u\|_{\mathcal{H}^{-s}}^{2} + & \|u\|_{\mathcal{H}^{(\alpha+\Delta-1)-s}}^{2} \leq c \|h\|\|u\| \\
\leq & c n^{-\frac{1}{2}\left(1 - \frac{s}{\alpha+\Delta-1}\right)}\|h\| \left( n^{\frac{1}{2}}\|u\|_{\mathcal{H}^{-s}}  \right)^{1-\frac{s}{\alpha+\Delta-1}}
\|u\|_{\mathcal{H}^{(\alpha+\Delta-1)-s}}^{\frac{s}{\alpha+\Delta-1}} \\
\leq & \frac{c}{2\delta}n^{-\left(1 - \frac{s}{\alpha+\Delta-1}\right)}\|h\|^{2}
+ \frac{c\delta}{2}\Big( n\|u\|_{\mathcal{H}^{-s}}^{2} + \|u\|_{\mathcal{H}^{(\alpha+\Delta-1)-s}}^{2} \Big),
\end{align*}
where interpolation inequality shown in Lemma \ref{L2Inter} has been employed. Then we obviously have
\begin{align*}
\|u\|_{\mathcal{H}^{(\alpha+\Delta-1)-s}} \lesssim n^{-\frac{1}{2}\left(1 - \frac{s}{\alpha+\Delta-1}\right)}\|h\|, \quad
\|u\|_{\mathcal{H}^{-s}} \lesssim n^{-1 + \frac{s}{2(\alpha+\Delta-1)}}\|h\|.
\end{align*}
Finally, inserting the above estimates into the following interpolation inequality
\begin{align*}
\|u\| \leq \|u\|_{\mathcal{H}^{-s}}^{1-\frac{s}{\alpha+\Delta-1}} \|u\|_{\mathcal{H}^{(\alpha+\Delta-1)-s}}^{\frac{s}{\alpha+\Delta-1}},
\end{align*}
we obtain
\begin{align*}
\|u\| \lesssim n^{-1 + \frac{s}{\alpha+\Delta-1}}\|h\|.
\end{align*}
Replacing $u = \mathcal{C}^{\frac{1}{2}(\alpha+\Delta-1)-\frac{s}{2}}(n\mathcal{M}(\alpha)+I)^{-1}\mathcal{C}^{\frac{1}{2}(\alpha+\Delta-1)-\frac{s}{2}}h$
gives the desired estimate (\ref{opEst1}).
\end{proof}

\textbf{Proof of Theorem \ref{converTheo2}}
\begin{proof}
	Denote $F_{dn}(\alpha) := \mathcal{C}_{0}^{\alpha + \Delta-1} + n^{-1}I$, then the conditional mean estimator for the artificial diagonal problem has the following form
	\begin{align}\label{cme1}
	\hat{m}_{dn}(\alpha) = m^{\dag} - \frac{1}{n}F_{dn}(\alpha)^{-1}m^{\dag} + \frac{1}{\sqrt{n}}\mathcal{C}_{0}^{\alpha + \Delta - 1}F_{dn}(\alpha)^{-1}\eta.
	\end{align}
	Because the parameters $\alpha$ and $\tilde{\alpha}$ are related with each other explicitly, 
	we use $\alpha$ instead of $\tilde{\alpha}$ here will not affect the conclusions obtained in Theorem \ref{converTheo1}.
	For the non-diagonal problem with $m \sim \mathcal{N}(0,\mathcal{M}(\alpha))$,
	the conditional mean estimator can be written as follows:
	\begin{align}\label{cme2}
	\hat{m}_{n}(\alpha) = m^{\dag} - \frac{1}{n}F_{n}(\alpha)^{-1}m^{\dag} + \frac{1}{\sqrt{n}}\mathcal{M}(\alpha)F_{n}(\alpha)^{-1}\eta,
	\end{align}
	where $F_{n}(\alpha) := \mathcal{M}(\alpha) + n^{-1}I$.
	Following (\ref{MarkovTrans1}) and recalling that $w_{n}$ and $\hat{m}_{n}(\alpha)$ are independent, we have
	\begin{align}
	\mathbb{E}_{0}\!\sup_{\alpha\in I_{n}}\mathbb{E}_{c, \alpha}\|w_n + \hat{m}_{n}(\alpha) - m^{\dag}\|^{2} \leq & \mathbb{E}_{0}\!\sup_{\alpha\in I_{n}}\mathbb{E}_{c,\alpha}\|w_{n}\|^{2} \!
	+ 2\mathbb{E}_{0}\!\sup_{\alpha\in I_{n}}\|\hat{m}_{n}(\alpha) - \hat{m}_{dn}(\alpha)\|^{2} \nonumber\\
	& + 2 \mathbb{E}_{0}\sup_{\alpha\in I_{n}}\|\hat{m}_{dn}(\alpha) - m^{\dag}\|^{2} \nonumber \\
	= & \text{I} + \text{II} + \text{III}. \nonumber
	\end{align}
	For term III, Theorem \ref{converTheo1} gives an appropriate estimate.
	For term I, let us denote the covariance operator as $\mathcal{C}_{n}^{p}$ which can be written as follows:
	\begin{align}\label{cme4}
	\mathcal{C}_{n}^{p} = (n\mathcal{M}(\alpha) + I)^{-1}\mathcal{M}(\alpha).
	\end{align}
	Considering $\Delta \geq 1$, both of $\mathcal{T}$ and $\mathcal{T}^{*}$ are bounded linear operators.
	Because $\mathcal{C}^{\alpha}$ is a trace class operator, we know that the operator $\mathcal{M}(\alpha)$ is a compact operator and
	can be diagonalized, which implies 
	$(n\mathcal{M}(\alpha) + I)^{-1}\mathcal{M}(\alpha) = \mathcal{M}(\alpha)^{1/2}(n\mathcal{M}(\alpha) + I)^{-1}\mathcal{M}(\alpha)^{1/2}$.
	Then for a white noise $\zeta$, we have the following estimate
	\begin{align}\label{cme5}
	\begin{split}
	\text{I} = & \sup_{\alpha\in I_{n}}\text{Tr}(\mathcal{C}_{n}^{p}) = \sup_{\tilde{\alpha}\in I_{n}}
	\text{Tr}(\mathcal{M}(\alpha)^{1/2}(n\mathcal{M}(\alpha) + I)^{-1}\mathcal{M}(\alpha)^{1/2}) \\
	= & \sup_{\alpha\in I_{n}}\mathbb{E}\Big\| \mathcal{M}(\alpha)^{1/2}(n\mathcal{M}(\alpha) + I)^{-1}\mathcal{M}(\alpha)^{1/2} \zeta \Big\|^{2} \\
	= & \sup_{\alpha\in I_{n}}\mathbb{E}\Big(\mathcal{C}^{s/2}\zeta,
	\mathcal{C}^{-s/2}\mathcal{M}(\alpha)^{1/2}(n\mathcal{M}(\alpha) + I)^{-1}\mathcal{M}(\alpha)^{1/2}\mathcal{C}^{-s/2}\mathcal{C}^{s/2}\zeta \Big)   \\
	\leq & \sup_{\alpha\in I_{n}}\Big\|\mathcal{C}^{-s/2}\mathcal{M}(\alpha)^{1/2}(n\mathcal{M}(\alpha)
	+ I)^{-1}\mathcal{M}(\alpha)^{1/2}\mathcal{C}^{-s/2}\Big\|_{\mathcal{B}(H)}
	\mathbb{E}\|\mathcal{C}^{s/2}\zeta\|^{2}.
	\end{split}
	\end{align}
	Choosing $s > \alpha_{0}$ in the above estimate, we know that $\mathbb{E}\|\mathcal{C}^{s/2}\zeta\|^{2} < \infty$ by Lemma 3.3 in \cite{Agapiou2013IP}.
	Through statement (5) of Assumptions 2, we have
	\begin{align}\label{jiacme1}
	\Big\| \mathcal{C}^{-\frac{1}{2}(\alpha+\Delta-1)+\frac{s}{2}}\mathcal{M}(\alpha)^{\frac{1}{2}}\mathcal{C}^{-\frac{s}{2}} \Big\|_{\mathcal{B}(H)} < \infty.
	\end{align}
	Combining estimates (\ref{cme5}), (\ref{jiacme1}) and Lemma \ref{opEstLemma1}, we obtain
	\begin{align}\label{jiacme2}
	\text{I} \lesssim \sup_{\alpha\in I_{n}}n^{-1 + \frac{s}{\alpha + \Delta - 1}}.
	\end{align}
	Taking $n$ large enough, we easily get
	\begin{align}\label{jiacme3}
	1 + \frac{\tilde{\epsilon} 2\tilde{\beta}}{1+2\tilde{\beta}+\tilde{\epsilon}}-\frac{1(1+\tilde{\epsilon}C_{2})}{1+2\tilde{\beta}+\tilde{\epsilon}}
	\frac{\log\log n}{\log n} > 1.
	\end{align}
	From the above estimate (\ref{jiacme3}) and the lower bound estimate (\ref{estAlp2}) of $\alpha$, we obtain
	\begin{align*}
	-1 + \frac{\alpha_{0}}{\alpha+\Delta-1} < -\frac{2\tilde{\beta}}{1+2\tilde{\beta}+\tilde{\epsilon}}.
	\end{align*}
	Hence, we can choose appropriate $s$ in (\ref{jiacme2}) to obtain
	\begin{align}\label{jiacme4}
	\text{I} \lesssim n^{-\frac{2\tilde{\beta}}{1+2\tilde{\beta}+\tilde{\epsilon}}}.
	\end{align}
	Now let us focus on term $\text{II}$. Obviously, we have
	\begin{align*}
	\hat{m}_{n}(\alpha) - \hat{m}_{dn}(\alpha) = & \frac{1}{n}\Big( F_{dn}(\alpha)^{-1} - F_{n}(\alpha)^{-1} \Big)m^{\dag} \\
	& \qquad\qquad
	+ \frac{1}{\sqrt{n}}\Big( \mathcal{M}(\alpha)F_{n}(\alpha)^{-1} - \mathcal{C}_{0}^{\alpha+\Delta-1}F_{dn}(\alpha)^{-1} \Big)\eta    \\
	= &\, \text{II}_{1} + \text{II}_{2}.
	\end{align*}
	Concerning term $\text{II}_{1}$, we find that
	\begin{align*}
	\Big\|\text{II}_{1}\Big\| = & \frac{1}{n}
	\Big\| (\mathcal{M}(\alpha) + n^{-1}I)^{-1}(\mathcal{C}_{0}^{\alpha+\Delta-1}-\mathcal{M}(\alpha))(\mathcal{C}_{0}^{\alpha+\Delta-1}+n^{-1}I)^{-1}m^{\dag} \Big\|   \\
	\leq & \frac{1}{n}\Big\| (\mathcal{M}(\alpha) + n^{-1}I)^{-1}\mathcal{C}_{0}^{\alpha+\Delta-1}(\mathcal{C}_{0}^{\alpha+\Delta-1}+n^{-1}I)^{-1}m^{\dag} \Big\|  \\
	& \qquad\quad\,\,\,\,
	+ \frac{1}{n} \Big\| (\mathcal{M}(\alpha) + n^{-1}I)^{-1}\mathcal{M}(\alpha)(\mathcal{C}_{0}^{\alpha+\Delta-1}+n^{-1}I)^{-1}m^{\dag} \Big\| \\
	= & \, \text{II}_{11} + \text{II}_{12}.
	\end{align*}
	Term $\text{II}_{11}$ can be estimated as follows:
	\begin{align*}
	\text{II}_{11} \lesssim &\, n^{-\frac{\tilde{\beta}}{1+2\tilde{\beta}+\tilde{\epsilon}}}
	\Big\| \mathcal{M}(\alpha)^{-\frac{\tilde{\beta}}{1+2\tilde{\beta}+\tilde{\epsilon}}}\mathcal{C}_{0}^{\alpha+\Delta-1}(\mathcal{C}_{0}^{\alpha+\Delta-1}+n^{-1}I)^{-1}m^{\dag} \Big\| \\
	\lesssim &\, n^{-\frac{\tilde{\beta}}{1+2\tilde{\beta}+\tilde{\epsilon}}}
	\Big\| \mathcal{M}(\alpha)^{-\frac{\tilde{\beta}}{1+2\tilde{\beta}+\tilde{\epsilon}}}
	\mathcal{C}_{0}^{\frac{\tilde{\beta}}{1+2\tilde{\beta}+\tilde{\epsilon}}(\alpha+\Delta-1)}\Big\|_{\mathcal{B}(H)}
	\Big\| \mathcal{C}_{0}^{-(\alpha+\Delta-1)\frac{\tilde{\beta}}{1+2\tilde{\beta}+\tilde{\epsilon}}}m^{\dag} \Big\|   \\
	\lesssim &\, n^{-\frac{\tilde{\beta}}{1+2\tilde{\beta}+\tilde{\epsilon}}}
	\Big\| \mathcal{M}(\alpha)^{-\frac{\tilde{\beta}}{1+2\tilde{\beta}+\tilde{\epsilon}}}
	\mathcal{C}^{\frac{\tilde{\beta}}{1+2\tilde{\beta}+\tilde{\epsilon}}(\alpha+\Delta-1)}\Big\|_{\mathcal{B}(H)}
	\Big\| \mathcal{C}^{-(\alpha+\Delta-1)\frac{\tilde{\beta}}{1+2\tilde{\beta}+\tilde{\epsilon}}}m^{\dag} \Big\|   \\
	\lesssim & \, n^{-\frac{\tilde{\beta}}{1+2\tilde{\beta}+\tilde{\epsilon}}}
	\Big\|m^{\dag}\Big\|_{\mathcal{H}^{(\alpha+\Delta-1)\frac{2\tilde{\beta}}{1+2\tilde{\beta}+\tilde{\epsilon}}}},
	\end{align*}
	where the third inequality is derived by using Lemma \ref{C0C} and the last inequality is
	obtained by employing statement (6) of Assumptions 2.
	Since $\alpha < \gamma+\frac{h}{2p}$, we have $(\alpha+\Delta-1)\frac{2\tilde{\beta}}{1+2\tilde{\beta}+\tilde{\epsilon}} \leq \gamma+\Delta-1$, which
	implies
	\begin{align}\label{jiacme6}
	\Big\|m^{\dag}\Big\|_{\mathcal{H}^{(\alpha+\Delta-1)\frac{2\tilde{\beta}}{1+2\tilde{\beta}+\tilde{\epsilon}}}} < \infty.
	\end{align}
	Hence, we obtain
	\begin{align}\label{jiacme5}
	\text{II}_{11} \lesssim n^{-\frac{\tilde{\beta}}{1+2\tilde{\beta}+\tilde{\epsilon}}}.
	\end{align}
	Similarly, term $\text{II}_{12}$ can be estimated as follows:
	\begin{align*}
	\text{II}_{12} \lesssim &\, n^{-\frac{\tilde{\beta}}{1+2\tilde{\beta}+\tilde{\epsilon}}}
	\Big\| \mathcal{M}(\alpha)^{1-\frac{\tilde{\beta}}{1+2\tilde{\beta}+\tilde{\epsilon}}}(\mathcal{C}_{0}^{\alpha+\Delta-1}+n^{-1}I)^{-1}m^{\dag} \Big\|   \\
	\lesssim &\, n^{-\frac{\tilde{\beta}}{1+2\tilde{\beta}+\tilde{\epsilon}}}
	\Big\| \mathcal{M}(\alpha)^{1-\frac{\tilde{\beta}}{1+2\tilde{\beta}+\tilde{\epsilon}}}
	\mathcal{C}^{(\alpha+\Delta-1)(1-\frac{\tilde{\beta}}{1+2\tilde{\beta}+\tilde{\epsilon}})} \Big\|_{\mathcal{B}(H)}
	\|m^{\dag}\|_{\mathcal{H}^{(\alpha+\Delta-1)\frac{2\tilde{\beta}}{1+2\tilde{\beta}+\tilde{\epsilon}}}}.
	\end{align*}
	Relying on statement (6) of Assumptions 2 and (\ref{jiacme6}), we obtain
	\begin{align}\label{jiacme7}
	\text{II}_{12} \lesssim n^{-\frac{\tilde{\beta}}{1+2\tilde{\beta}+\tilde{\epsilon}}}.
	\end{align}
	Combining estimates (\ref{jiacme5}) and (\ref{jiacme7}), we obtain
	\begin{align}\label{jiacme11}
	\|\text{II}_{1}\|^{2} \lesssim n^{-\frac{2\tilde{\beta}}{1+2\tilde{\beta}+\tilde{\epsilon}}}.
	\end{align}
	Through some simple calculations, term $\text{II}_{2}$ can be reformulated as follows:
	\begin{align*}
	\text{II}_{2} = \frac{1}{n^{3/2}}(\mathcal{M}(\alpha)+n^{-1}I)^{-1}(\mathcal{M}(\alpha)-\mathcal{C}_{0}^{\alpha+\Delta-1})
	(\mathcal{C}_{0}^{\alpha+\Delta-1}+n^{-1}I)^{-1}\eta.
	\end{align*}
	Then, we have
	\begin{align*}
	\mathbb{E}_{0}\big\| \text{II}_{2} \big\|^{2} \leq & \,
	n^{-3}\mathbb{E}_{0}\Big\|(\mathcal{M}(\alpha)+n^{-1}I)^{-1}\mathcal{C}_{0}^{\alpha+\Delta-1}(\mathcal{C}_{0}^{\alpha+\Delta-1}+n^{-1}I)^{-1}\eta\Big\|^{2} \\
	& \quad\quad
	+ n^{-3}\mathbb{E}_{0}\Big\|(\mathcal{M}(\alpha)+n^{-1}I)^{-1}\mathcal{M}(\alpha)(\mathcal{C}_{0}^{\alpha+\Delta-1}+n^{-1}I)^{-1}\eta\Big\|^{2} \\
	\leq &\, n^{-1}\mathbb{E}_{0}\Big\|\mathcal{C}_{0}^{\alpha+\Delta-1}(\mathcal{C}_{0}^{\alpha+\Delta-1}+n^{-1}I)^{-1}\eta\Big\|^{2} \\
	& \quad\quad
	+ n^{-1}\mathbb{E}_{0}\Big\| \mathcal{M}(\alpha)(\mathcal{C}_{0}^{\alpha+\Delta-1}+n^{-1}I)^{-1}\eta \Big\|^{2} \\
	= & \, \text{II}_{21} + \text{II}_{22}.
	\end{align*}
	For term $\text{II}_{21}$, we have the following estimates
	\begin{align}\label{jiacme8}
	\begin{split}
	\text{II}_{21} \lesssim & \, n^{-\frac{2\tilde{\beta}}{1+2\tilde{\beta}+\tilde{\epsilon}}}
	\mathbb{E}_{0}\Big\| \mathcal{C}_{0}^{(\alpha+\Delta-1)(\frac{1}{2}+\frac{\tilde{\beta}}{1+2\tilde{\beta}+\tilde{\epsilon}})}\eta \Big\|^{2} \\
	\lesssim & \, n^{-\frac{2\tilde{\beta}}{1+2\tilde{\beta}+\tilde{\epsilon}}}
	\mathbb{E}_{0}\Big\| \mathcal{C}^{(\alpha+\Delta-1)(\frac{1}{2}+\frac{\tilde{\beta}}{1+2\tilde{\beta}+\tilde{\epsilon}})}\eta \Big\|^{2}.
	\end{split}
	\end{align}
	Because $(\alpha+\Delta-1)\left(1+\frac{2\tilde{\beta}}{1+2\tilde{\beta}+\tilde{\epsilon}}\right) > \alpha_{0}$ for $n$ large enough, from (\ref{jiacme8}) we obtain
	\begin{align}\label{jiacme9}
	\text{II}_{21} \lesssim \, n^{-\frac{2\tilde{\beta}}{1+2\tilde{\beta}+\tilde{\epsilon}}},
	\end{align}
	where we used Lemma 3.3 in \cite{Agapiou2013IP}.
	For term $\text{II}_{22}$, we have
	\begin{align}\label{jiacme10}
	\begin{split}
	\text{II}_{22} \leq &\, n^{-1}\Big\| \mathcal{M}(\alpha)\mathcal{C}_{0}^{-(\alpha+\Delta-1)}\Big\|^{2} \mathbb{E}_{0}
	\Big\|\mathcal{C}_{0}^{(\alpha+\Delta-1)}(\mathcal{C}_{0}^{\alpha+\Delta-1}+n^{-1}I)^{-1}\eta \Big\|^{2}    \\
	\leq &\, n^{-1}\Big\| \mathcal{M}(\alpha)\mathcal{C}^{-(\alpha+\Delta-1)}\Big\|^{2} \mathbb{E}_{0}
	\Big\|\mathcal{C}_{0}^{(\alpha+\Delta-1)}(\mathcal{C}_{0}^{\alpha+\Delta-1}+n^{-1}I)^{-1}\eta \Big\|^{2}    \\
	\lesssim &\, \text{II}_{21} \lesssim n^{-\frac{2\tilde{\beta}}{1+2\tilde{\beta}+\tilde{\epsilon}}},
	\end{split}
	\end{align}
	where statement (7) of Assumptions 2 has been employed.
	Combining estimates (\ref{jiacme9}) and (\ref{jiacme10}), we obtain
	\begin{align}\label{jiacme12}
	\mathbb{E}_{0}\|\text{II}_{2}\|^{2} \lesssim n^{-\frac{2\tilde{\beta}}{1+2\tilde{\beta}+\tilde{\epsilon}}}.
	\end{align}
	Combining estimations of $\text{I}, \text{II}$ and $\text{III}$ and recalling (\ref{MarkovTrans1}), we finish the proof.
\end{proof}

\textbf{Proof of Theorem \ref{mainTheorem}}
\begin{proof}
	\textbf{Step 1}.
	Let us firstly assume that 
	\begin{align}\label{mainTh1}
	\mathbb{E}_{0}\mu_{\hat{\alpha}_{n}}^{d}(S_{n}^{c}) \rightarrow 0
	\end{align}
	holds true. Let $k_{n}$, $\rho_{n}$ and $c$ in the definition of $S_{n}$ be fixed. For any $u \in S_{n}$, we have
	\begin{align}\label{mainTh2}
	\begin{split}
	\|u\|^{2} = & \sum_{j = 1}^{\infty}u_{j}^{2} = \sum_{j \leq k_{n}}u_{j}^{2} + \sum_{j > k_{n}}u_{j}^{2} \\
	\leq & \sum_{j \leq k_{n}}u_{j}^{2} + c\rho_{n}^{2} = \sum_{j\leq k_{n}}\lambda_{j}^{2(\Delta-1)}\lambda_{j}^{-2(\Delta-1)}u_{j}^{2} + c\rho_{n}^{2}    \\
	\leq & \lambda_{k_{n}}^{-2(\Delta-1)}\sum_{j\leq k_{n}}\lambda_{j}^{2(\Delta-1)}u_{j}^{2} + c\rho_{n}^{2}   \\
	\leq & \lambda_{k_{n}}^{-2(\Delta-1)}\|u\|_{\mathcal{H}^{-(\Delta-1)}}^{2} + c\rho_{n}^{2}.
	\end{split}
	\end{align}
	Denote $u^{\dag}_{s}:=\{u^{\dag}_{j}\}_{j=1}^{\infty}$ with $u^{\dag}_{j}:=(u^{\dag},\phi_{j})$ for $j=1,2,\ldots$.
	Denote $u_{n}^{\dag}$ be function related to the projection of $u_{s}^{\dag}$ on the first $k_{n}$ coordinates,
	that is, $u_{n}^{\dag} = \sum_{j=1}^{k_{n}}u_{j}^{\dag}\phi_{j}$.
	Then, we have
	\begin{align}\label{mainTh3}
	\begin{split}
	\|u_{n}^{\dag} - u^{\dag}\|^{2} = & \sum_{j = 1}^{\infty}(u_{n,j}^{\dag} - u^{\dag}_{j})^{2} = \sum_{j > k_{n}}(u^{\dag}_{j})^{2} \\
	= & \sum_{j > k_{n}}\lambda_{j}^{-2\gamma}(u_{j}^{\dag})^{2}\lambda_{j}^{2\gamma} \leq \lambda_{k_{n}}^{2\gamma}\|u^{\dag}\|_{\mathcal{H}^{\gamma}}^{2},
	\end{split}
	\end{align}
	and
	\begin{align}\label{mainTh4}
	\begin{split}
	\|u_{n}^{\dag} - u^{\dag}\|_{\mathcal{H}^{-(\Delta-1)}}^{2} = & \sum_{j=1}^{\infty}\lambda_{j}^{2(\Delta-1)}(u_{n,j}^{\dag} - u_{j}^{\dag})^{2} \\
	= & \sum_{j > k_{n}}\lambda_{j}^{2(\Delta-1)}(u_{j}^{\dag})^{2}\lambda_{j}^{-2\gamma}\lambda_{j}^{2\gamma}  \\
	\leq & \lambda_{k_{n}}^{2(\Delta-1+\gamma)}\|u^{\dag}\|_{\mathcal{H}^{\gamma}}^{2}.
	\end{split}
	\end{align}
	Using estimates (\ref{mainTh3}) and (\ref{mainTh4}), we find that
	\begin{align}\label{mainTh5}
	\begin{split}
	\|u - & u^{\dag}\| \leq \|u - u_{n}^{\dag}\| + \|u_{n}^{\dag} - u^{\dag}\|  \\
	\leq & \lambda_{k_{n}}^{-(\Delta-1)}\|u - u_{n}^{\dag}\|_{\mathcal{H}^{-(\Delta-1)}} + \sqrt{c}\rho_{n} + \lambda_{k_{n}}^{\gamma}\|u^{\dag}\|_{\mathcal{H}^{\gamma}}  \\
	\leq & \lambda_{k_{n}}^{-(\Delta-1)}\Big[
	\|u - u^{\dag}\|_{\mathcal{H}^{-(\Delta-1)}} + \lambda_{k_{n}}^{\Delta-1+\gamma}\|u^{\dag}\|_{\mathcal{H}^{\gamma}} \Big] + \sqrt{c}\rho_{n}
	+ \lambda_{k_{n}}^{\gamma}\|u^{\dag}\|_{\mathcal{H}^{\gamma}}  \\
	\leq & \lambda_{k_{n}}^{-(\Delta-1)}\|u-u^{\dag}\|_{\mathcal{H}^{-(\Delta-1)}} + \sqrt{c}\rho_{n} + 2\lambda_{k_{n}}^{\gamma}\|u^{\dag}\|_{\mathcal{H}^{\gamma}}    \\
	\lesssim & \lambda_{k_{n}}^{-(\Delta-1)}\|u-u^{\dag}\|_{\mathcal{H}^{-(\Delta-1)}} + \rho_{n} + \lambda_{k_{n}}^{\gamma} \\
	\lesssim & \lambda_{k_{n}}^{-(\Delta-1)}\|\mathcal{T}u-\mathcal{T}u^{\dag}\| + \rho_{n} + \lambda_{k_{n}}^{\gamma}.
	\end{split}
	\end{align}
	For a large positive constant $M > 0$ (will be specified later), we denote $\tilde{\epsilon} = \epsilon/M$. 
	Denote $L_n = (\log n)^{C_2 + 1}$ as in Theorem \ref{converTheo2}. Let
	\begin{align}\label{mainTh6}
	k_{n} = n^{\frac{1}{1+\frac{2p}{h}(\gamma+\Delta-1)+\tilde{\epsilon}}}, \quad
	\rho_{n} = L_{n}n^{-\frac{\frac{1}{2}\left( \frac{2p}{h}\hat{\alpha}_{n} - 1 - \tilde{\epsilon} \right)}{1+\frac{2p}{h}(\gamma+\Delta-1)+\tilde{\epsilon}}},
	\end{align}
	and $\|\mathcal{T}u-\mathcal{T}u^{\dag}\| \leq M_{n}\tilde{\epsilon}_{n}$ with $\tilde{\epsilon}_{n}$ defined as in Theorem \ref{converTheo2}, we find that
	\begin{align}\label{mainTh7}
	\begin{split}
	\|u - u^{\dag}\| \lesssim & M_{n}L_{n}k_{n}^{\frac{p}{h}(\Delta-1)} n^{-\frac{\frac{p}{h}(\gamma+\Delta-1)}{1+\frac{2p}{h}(\gamma+\Delta-1)+\tilde{\epsilon}}} +
	L_{n}n^{-\frac{\frac{1}{2}\left( \frac{2p}{h}\hat{\alpha}_{n} - 1 - \tilde{\epsilon}\right)}{1+\frac{2p}{h}(\gamma+\Delta-1)+\tilde{\epsilon}}}
	+ k_{n}^{-\frac{p}{h}\gamma} \\
	\lesssim & M_{n}L_{n}n^{-\frac{\frac{1}{2}\left( \frac{2p}{h}\hat{\alpha}_{n} - 1 \right) - \frac{1}{2}\tilde{\epsilon}}{1+\frac{2p}{h}(\gamma+\Delta-1)+\tilde{\epsilon}}}.
	\end{split}
	\end{align}
	Applying estimate (\ref{estAlp2}) to the last line of (\ref{mainTh7}), we finally obtain
	\begin{align}\label{mainTh8}
	\|u - u^{\dag}\| \lesssim M_{n}L_{n}(\log n)^{C_{3}}n^{-\frac{p\gamma - \frac{h}{2}\tilde{\epsilon}}{h+2p(\gamma+\Delta-1) + h\tilde{\epsilon}}}.
	\end{align}
	Taking $M > 0$ large enough such that
	\begin{align}\label{mainTh92}
	\frac{h^{2} + 2ph(\gamma+\Delta-1)+h\epsilon}{2p\gamma} + h \leq M,
	\end{align}
	then we have
	\begin{align*}
	n^{-\frac{p\gamma-\frac{h}{2}\tilde{\epsilon}}{h+2p(\gamma+\Delta-1)+h\tilde{\epsilon}}}\leq
	n^{-\frac{p\gamma}{h+2p(\gamma+\Delta-1)+\epsilon}}.
	\end{align*}
	Recalling Theorem \ref{converTheo2} and Theorem 2.1 in \cite{Salomond2018B}, the proof is completed.
	
	\textbf{Step 2}. In this step, we aim to prove (\ref{mainTh1}).
	In the following, we use the values of $k_{n}$ and $\rho_{n}$ given in (\ref{mainTh6}).
    Let $W_1, W_2, \cdots$ be independent standard normal random variables, then we have 
	\begin{align*}
	\mu_{n}^{0}(\mathcal{S}_{n}^{c}) = \text{Pr}\left( \sum_{i>k_{n}}\lambda_{i}^{2\hat{\alpha}_{n}}W_{i}^{2} > c\rho_{n}^2 \right).
	\end{align*}
	For some $t>0$
	\begin{align*}
	\text{Pr}\left( \sum_{i>k_{n}}\lambda_{i}^{2\hat{\alpha}_{n}}W_{i}^{2} > c\rho_{n}^2 \right) = &
	\text{Pr}\left( \exp\left(t\sum_{i>k_{n}}\lambda_{i}^{2\hat{\alpha}_{n}}W_{i}^{2}\right) > \exp(tc\rho_{n}^2) \right)  \\
	\leq & \exp\left( -tc\rho_n^2 \right)\mathbb{E}\exp\left( t\sum_{i>k_n}\lambda_i^{2\hat{\alpha}_n}W_i^2 \right)	\\
	= & \exp\left( -tc\rho_n^2 \right) \prod_{i>k_n}\left( 1-2t\lambda_{i}^{2\hat{\alpha}_n} \right)^{-1/2}.
	\end{align*}
	Taking the logarithm of the right-hand side of the above inequality, we have 
	\begin{align*}
	-tc\rho_n^2 - \frac{1}{2}\sum_{i > k_n}\log\big( 1-2t\lambda_i^{2\hat{\alpha}_n} \big) \leq 
	-tc\rho_n^2 + \frac{1}{2}\sum_{i>k_n}\frac{2t\lambda_i^{2\hat{\alpha}_n}}{1-2t\lambda_{i}^{2\hat{\alpha}_n}},
	\end{align*}
	where we used the elementary inequality $\log(1-y)\geq -y/(1-y)$. 
	For the second term on the right-hand side, we have 
	\begin{align*}
	\frac{1}{2}\sum_{i>k_n}\frac{2t\lambda_i^{2\hat{\alpha}_n}}{1-2t\lambda_{i}^{2\hat{\alpha}_n}} \leq 
	\frac{tC^{2\hat{\alpha}_n}}{1-2tC^{2\hat{\alpha}_n}k_n^{-2p\hat{\alpha}_n/h}}\sum_{i>k_n}i^{-\frac{2p}{h}\hat{\alpha}_n}, 
	\end{align*}
	where $C$ is the constant appeared in Assumptions 1 such that $C^{-1}\leq c_i \leq C$ with $i=1,2,\cdots$. 
	Since $x^{-2p\hat{\alpha}_n/h}$ is decreasing, we find that 
	\begin{align*}
	\sum_{i > k_n} i^{-2p\hat{\alpha}_n/h}\leq \int_{k_n}^{\infty} x^{-2p\hat{\alpha}_n/h} dx + k_n^{-2p\hat{\alpha}_n/h}
	\leq k_n^{1-\frac{2p\hat{\alpha}_n}{h}}\frac{2p\hat{\alpha}_n/h}{2p\hat{\alpha}_n/h-1}. 
	\end{align*}
	Considering the above estimates and taking $t=\frac{k_n^{2p\hat{\alpha}_n/h}}{4C^{2\hat{\alpha}_n}}$, we obtain
	\begin{align*}
	\mu_n^0(\mathcal{S}_n^c) \leq \exp\left( 
	-\frac{c}{4C^{2\hat{\alpha}_n}}\rho_n^2 k_{n}^{2p\hat{\alpha}_n/h} + 
	\frac{1}{2}k_n\frac{2p\hat{\alpha}_n/h}{2p\hat{\alpha}_n/h-1}
	\right).
	\end{align*}
	Considering the explicit form of $\rho_n$, $k_n$ and taking $n$ large enough, we have
	\begin{align*}
	\frac{c}{4}\rho_n^2 k_{n}^{2p\hat{\alpha}_n/h} \geq C^{2\hat{\alpha}_n}\frac{2p\hat{\alpha}_n/h}{2p\hat{\alpha}_n/h-1}.
	\end{align*}
	Finally, we obtain
	\begin{align}\label{mainTh9}
	\begin{split}
	\mu_{n}^{0}(S_{n}^{c}) \leq & \exp\left( -\frac{c}{8C^{2\hat{\alpha}_n}}\rho_{n}^{2}k_{n}^{\frac{2p}{h}\hat{\alpha}_{n}} \right) \\
	= & \exp\left( -\frac{c}{8C^{2\hat{\alpha}_n}}L_{n}^{2}n^{\frac{1+\tilde{\epsilon}}{1+\frac{2p}{h}(\gamma+\Delta-1)+\tilde{\epsilon}}} \right).
	\end{split}
	\end{align}
	Choose $\tilde{\epsilon}_{n}$ as in Theorem \ref{converTheo2} and notice that for a constant $C > 0$,
	\begin{align}\label{term51}
	\begin{split}
	& \mu_{n}^{0}(B_{n}(\mathcal{A}^{-1}u^{\dag},\tilde{\epsilon}_{n})) \geq \mu_{n}^{0}(u\,:\, \|\mathcal{T}u-\mathcal{T}u^{\dag}\|^{2} \leq \tilde{\epsilon}_{n}^{2}) \\
	& \qquad \geq \mu_{n}^{0}\Big(u\,:\, \sum_{i=1}^{\infty}i^{-\frac{2p}{h}(\Delta-1)}c_i^{-2\hat{\alpha}_n}(u_i-u_i^{\dag})^2 \leq C^{-1}\tilde{\epsilon}_{n}^{2}\Big) \geq \mu_{n}^{0}(A_1)
	\mu_{n}^{0}(A_2).
	\end{split}
	\end{align}
	where
	\begin{align}
	A_1 & = \left\{ u:\sum_{i=1}^{N}i^{-\frac{2p}{h}(\Delta-1)}c_i^{-2\hat{\alpha}_n}(u_{i} - u_{i}^{\dag})^{2} \leq \frac{\tilde{\epsilon}_{n}^{2}}{2C} \right\}, \\
	A_2 & = \left\{ u:\sum_{i=N+1}^{\infty}i^{-\frac{2p}{h}(\Delta-1)}c_i^{-2\hat{\alpha}_n}(u_{i} - u_{i}^{\dag})^{2} \leq \frac{\tilde{\epsilon}_{n}^{2}}{2C} \right\}.
	\end{align}
	In the following, we will use (\ref{estAlp2}) frequently during the estimates related to $\hat{\alpha}_n$ 
	without mentioning it each time. We have
	\begin{align}\label{ddd1}
	\begin{split}
	& \sum_{i=N+1}^{\infty}i^{-\frac{2p}{h}(\Delta-1)}c_i^{-2\hat{\alpha}_n}(u_{i} - u_{i}^{\dag})^{2} \\
	& \qquad \leq 
	2\sum_{i=N+1}^{\infty}i^{-\frac{2p}{h}(\Delta-1)}c_i^{-2\hat{\alpha}_n}u_{i}^2 + 2\sum_{i=N+1}^{\infty}i^{-\frac{2p}{h}(\Delta-1)}c_i^{-2\hat{\alpha}_n}(u_{i}^{\dag})^2. 
	\end{split}
	\end{align}
	The second sum in the display above is less than or equal 
	\begin{align}
	2\tilde{C}N^{-\frac{2p}{h}(\gamma+\Delta-1)} \|u^{\dag}\|_{\mathcal{H}^{\gamma}}^2 < \frac{\tilde{\epsilon}_{n}^{2}}{4C}, 
	\end{align}
	whenever $N > N_1=(8\tilde{C}\|u^{\dag}\|_{\mathcal{H}^{\gamma}}^2)^{\frac{1}{\frac{2p}{h}(\gamma+\Delta-1)}} \tilde{\epsilon}_{n}^{-\frac{1}{\frac{p}{h}(\gamma+\Delta-1)}}$. 
	Here and in the following, $\tilde{C}$ will be some general constant that may be different from line to line. 
	By Chebyshev's inequality, the first sum on the right-hand side of (\ref{ddd1}) is less than $\frac{\tilde{\epsilon}_{n}^{2}}{4C}$ with 
	probability at least
	\begin{align}
	\begin{split}
	& 1-\frac{8C}{\tilde{\epsilon}_n^2}\sum_{i=N+1}^{\infty}i^{-\frac{2p}{h}(\hat{\alpha}_n + \Delta - 1)}  \\
	& \qquad\quad \geq 
	1 - \frac{8C}{\tilde{\epsilon}_n^2 \left( \frac{2p}{h}(\hat{\alpha}_n+\Delta-1)-1 \right) N^{\frac{2p}{h}(\hat{\alpha}_n+\Delta-1)-1}} > 1/2
	\end{split}
	\end{align}
	if $N > N_2 = \tilde{C}(p,h,\gamma,\ell,\beta)\tilde{\epsilon}_n^{-\frac{2}{\frac{2p}{h}(\hat{\alpha}_n+\Delta-1)-1}}$.
	To bound the first term in (\ref{term51}), we apply Lemma 6.2 in \cite{Belitser2003AS} with $\xi_i = i^{-\frac{p}{h}(\Delta-1)}c_i^{-\hat{\alpha}_n}u_i^{\dag}$
	and $\delta^2 = \tilde{\epsilon}_n^2/2C$. Note that 
	\begin{align}
	\begin{split}
	\sum_{i=1}^{N} i^{\frac{2p}{h}(\hat{\alpha}_n +\Delta-1)}\xi_i^2 \leq 
	\tilde{C}N^{\frac{2p}{h}(\hat{\alpha}_n-\gamma)}\|u^{\dag}\|_{\mathcal{H}^{\gamma}}^2.
	\end{split}
	\end{align}
	Following the arguments given in the proof of Lemma 5.1 in \cite{Salomond2018B}, we obtain
	\begin{align}
	\begin{split}
	& \mu_n^0(u : \|\mathcal{T}u - \mathcal{T}u^{\dag}\| \leq \tilde{\epsilon}_n)  \\
	& \quad \geq 
	\frac{1}{8}\exp\left(\!-\left( \frac{2p}{h}(\hat{\alpha}_n + \Delta - 1) + \frac{\log 2}{2} \right)N \right)
	\exp\left(\! -\tilde{C}N^{\frac{2p}{h}(\hat{\alpha}_n-\gamma)}\|u^{\dag}\|_{\mathcal{H}^{\gamma}}^2 \right)
	\end{split}
	\end{align}
	when $N > N_3 = \tilde{C}(p,h,\gamma,\ell,\beta)\tilde{\epsilon}_n^{-\frac{2}{\frac{2p}{h}(\hat{\alpha}_n+\Delta-1)-1}}$.
	Since $\frac{2p}{h}(\hat{\alpha}_n-\gamma) \leq 1$, there is a constant $C_4$ such that 
	\begin{align}\label{mainTh10}
	\begin{split}
	& \mu_{n}^{0}(B_{n}(\mathcal{A}^{-1}u^{\dag},\tilde{\epsilon}_{n})) \geq \tilde{C}
	\exp\left( -C_{4}N \right).
	\end{split}
	\end{align}
	Considering $N > \max\{N_1, N_2, N_3\}$, we have 
	\begin{align}\label{mainTh101}
	\mu_{n}^{0}(B_{n}(\mathcal{A}^{-1}u^{\dag},\tilde{\epsilon}_{n})) \geq \tilde{C}
	\exp\left( - C_5 \tilde{\epsilon}_n^{-\frac{2}{\frac{2p}{h}(\hat{\alpha}_n+\Delta-1)-1}} \right),
	\end{align}
	where $C_5$ is a positive constant. 
	Combining (\ref{mainTh9}) and (\ref{mainTh101}), we arrive at
	\begin{align}\label{mainTh11}
	\begin{split}
	\frac{\mu_{n}^{0}(S_{n}^{c})}{\mu_{n}^{0}(B_{n}(\mathcal{A}^{-1}u^{\dag},\tilde{\epsilon}_{n}))} \lesssim
	\exp\left( -\Big( \frac{c}{8C^{\gamma + h/2p}} - C_{5}L_{n}^{-\frac{h}{p(\hat{\alpha}_n+\Delta-1)}-2}n^{r} \Big)
	n\tilde{\epsilon}_{n}^{2} \right),
	\end{split}
	\end{align}
	where $r = -\frac{1+\tilde{\epsilon}}{1+2\tilde{\beta}+\tilde{\epsilon}} + \frac{2\tilde{\beta}}{(1+2\tilde{\beta}+\tilde{\epsilon})\left(\frac{2p}{h}(\hat{\alpha}_n+\Delta-1)-1\right)}$. 
	Through direct calculations, we can verify that $r \leq 0$ when $n$ is taken to be a large number such that 
	$\log n > \frac{hC_2}{p\tilde{\epsilon}(\gamma+\Delta-1)}\log\log n$. 
	Choosing $c$ in the definition of $S_{n}$ large enough, we obtain (\ref{mainTh1}) by using Lemma \ref{cond1lemma}, which 
	completes the proof. 
\end{proof}

\textbf{Proof of (\ref{set3})}
\begin{proof}
Firstly, let us denote $W_{z}$ as the white noise mapping for $z\in H$ \cite{Prato2006AIIDA}.
Recalling the Cameron-Martin formula, we obtain
\begin{align}\label{equil1}
\frac{d\mu_{\ell}'}{d\mu_{\ell}}(d) = \exp\left( -\frac{n}{2}\big(\|\mathcal{T}u \|^{2} - \|\mathcal{T}u^{\dag}\|^{2}\big) 
+ W_{\sqrt{n}\mathcal{T}(u-u^{\dag})}(d) \right).
\end{align}
Taking logarithm with respect to the above equality (\ref{equil1}) and integrating the obtained equality, we obtain
\begin{align}\label{equil2}
\begin{split}
-\int\log\frac{d\mu_{\ell}'}{d\mu_{\ell}}d\mu_{\ell} = &
\int \frac{n}{2}\|\mathcal{T}u\|^{2} - \frac{n}{2}\|\mathcal{T}u^{\dag}\|^{2} - W_{\sqrt{n}\mathcal{T}(u-u^{\dag})}(\cdot)d\mu_{\ell} \\
= & \frac{n}{2}\|\mathcal{T}(u - u^{\dag})\|^{2},
\end{split}
\end{align}
where the properties of the white noise mapping are employed.
To complete the proof, we should notice that
\begin{align}\label{equil3}
\begin{split}
\int \Big| \log\frac{d\mu_{\ell}'}{d\mu_{\ell}} - & \int\log\frac{d\mu_{\ell}'}{d\mu_{\ell}}d\mu_{\ell} \Big|^{2}d\mu_{\ell} \\
& = \int\left( \log\frac{d\mu_{\ell}'}{d\mu_{\ell}} \right)^{2}d\mu_{\ell} - \left(\int\log\frac{d\mu_{\ell}'}{d\mu_{\ell}}d\mu_{\ell}\right)^{2}.
\end{split}
\end{align}
For the first term on the right hand side of (\ref{equil3}), we have
\begin{align}\label{equil4}
\begin{split}
\int\left( \log\frac{d\mu_{\ell}'}{d\mu_{\ell}} \right)^{2}d\mu_{\ell} = &
\frac{n^{2}}{4}\big(\|\mathcal{T}u \|^{2} - \|\mathcal{T}u^{\dag}\|^{2}\big)^{2} 
+ \int \left( W_{\sqrt{n}\mathcal{T}(u-u^{\dag})}(\cdot) \right)^{2}d\mu_{\ell} \\
& - \int n\big(\|\mathcal{T}u \|^{2} - \|\mathcal{T}u^{\dag}\|^{2}\big)W_{\sqrt{n}\mathcal{T}(u-u^{\dag})}(\cdot)d\mu_{\ell} \\
= & \frac{n^{2}}{4}\|\mathcal{T}(u-u^{\dag})\|^{4} + n\|\mathcal{T}(u-u^{\dag})\|^{2}.
\end{split}
\end{align}
Because
\begin{align*}
\left(\int\log\frac{d\mu_{\ell}'}{d\mu_{\ell}}d\mu_{\ell}\right)^{2} = \frac{n^{2}}{4}\|\mathcal{T}(u-u^{\dag})\|^{4},
\end{align*}
we find that
\begin{align}\label{equil5}
\int \Big| \log\frac{d\mu_{\ell}'}{d\mu_{\ell}} - & \int\log\frac{d\mu_{\ell}'}{d\mu_{\ell}}d\mu_{\ell} \Big|^{2}d\mu_{\ell} =
n\|\mathcal{T}(u-u^{\dag})\|^{2}.
\end{align}
Combining equalities (\ref{equil2}) and (\ref{equil5}), we obtain
\begin{align*}
B_{n}(\mathcal{A}^{-1}u^{\dag},\epsilon) = \Big\{ u\in H \,:\, \|\mathcal{T}(u - u^{\dag})\|^{2} \leq \epsilon^{2} \Big\}.
\end{align*}
\end{proof}



\section*{Acknowledgments}
The authors would like to thank the anonymous
referees for their comments and suggestions, which helped to improve the paper significantly. 
This work was partially supported by the NSFC under the grants Nos. 11871392, 11771347, and
the National Key R\&D Program of China grant No. 2018YFC0603501,
and the National Science and Technology Major Project under grant Nos. 2016ZX05024-001-007 and 2017ZX05069.

\bibliographystyle{plain}
\bibliography{references}

\end{document}